\def\notes#1{\typeout{Read the notes!}}
\def\barDelta{\bar \Delta}
\def\epsy{\varepsilon}
\def\pertu{\nu} 
\newcommand{\ud}{\,\mathrm{d}}
\def\varble{\, \cdot\,}
\def\FRAC#1#2#3{\genfrac{}{}{}{#1}{#2}{#3}}
\def\half{{\mathchoice{\FRAC{1}{1}{2}}%
{\FRAC{2}{1}{2}}%
{\FRAC{3}{1}{2}}%
{\FRAC{4}{1}{2}}}}
\def\bbbone{{\mathchoice {\rm 1\mskip-4mu l} {\rm 1\mskip-4mu l}
{\rm 1\mskip-4.5mu l} {\rm 1\mskip-5mu l}}}
\def\ind{\bbbone}
\def\Sec#1{Sec.~\ref{#1}}
\def\Prop#1{Prop.~\ref{#1}}
\def\gen{{\cal D}}
\def\qed{\hspace*{\fill}~\QED\par\endtrivlist\unskip}
\newcommand{\tr}{\mbox{\rm tr}}
\def\Inov{I} 
\def\v{{\sf K}}
\def\w{{\sf \Omega}}
\def\Expect{{\sf E}}
\def\E{{\sf E}}
\def\clZ{{\cal Z}}
\def\w{{\sf \Omega}}
\def\Sec#1{Sec~\ref{#1}}
\def\smopot{{\cal G}}
\def\QED{\mbox{\rule[0pt]{1.3ex}{1.3ex}}}
\newcounter{anum}
\def\R{\mathbb R}
\newtheorem{remark}{Remark}
\title{
Poisson's Equation in Nonlinear Filtering
}
\author{Richard S. Laugesen
\thanks{R.~S.~Laugesen is with the Department of Mathematics at University of
Illinois at Urbana-Champaign (UIUC)
({\tt\small laugesen@illinois.edu})}
\and Prashant G.~Mehta
\thanks{P.~G.~Mehta is with the Coordinated Science Laboratory and
the Department of Mechanical Science and Engineering at UIUC
({\tt\small mehtapg@illinois.edu})}
\and \linebreak Sean P.~Meyn%
\thanks{S.~P.~Meyn is with the Department of Electrical and Computer
Engineering at University of Florida at Gainesville
({\tt\small meyn@ufl.edu})}%
\and Maxim Raginsky%
\thanks{M.~Raginsky is with the Department of Electrical and Computer
Engineering and the Coordinated Science Laboratory at UIUC ({\tt\small maxim@illinois.edu})}
}
\begin{document}
\maketitle

\begin{abstract}
The aim of this paper is to provide a variational interpretation of the
nonlinear filter in continuous time.  A time-stepping procedure is introduced, 
consisting of successive minimization problems in the space of probability densities.  The weak
form of the nonlinear filter is derived via analysis of the first-order optimality conditions for these problems.  The derivation shows the nonlinear filter dynamics may be regarded as a gradient flow,
or a steepest descent, for a certain 
energy functional
with respect to the Kullback--Leibler divergence.

\smallskip

The second part of the paper is concerned with derivation of the
feedback particle filter algorithm, based again on the analysis of the
first variation.  The algorithm is shown to be exact.  That is, the
posterior distribution of the particle matches exactly the true
posterior, provided the filter is initialized with the true prior. 
\end{abstract}

\section{Introduction}

The goal of this paper is to gain insight into the equations arising in nonlinear filtering, as well as into the \textit{feedback particle filter} introduced in recent research.   To expose the main ideas, it is useful to restrict our attention to the following special case in which the state evolution is constant:  
\begin{subequations}
\begin{align}
\ud X_t &= 0,
\label{eqn:Signal_Process}
\\
\ud Z_t &= h(X_t)\ud t + \ud W_t,
\label{eqn:Obs_Process}
\end{align}
\end{subequations}
where $X_t\in\R^d$ is the state at time $t$, 
$Z_t \in\R^1$ is the
observation process, $h(\varble)$ is a $C^1$
function, and $\{W_t\}$ is a standard
Wiener process.  The state is constant, and has initial condition distributed as $X_0\sim p_0^*$.   Unless otherwise noted, the stochastic
differential equations (SDEs) are expressed in It\^{o} form. Also, unless noted otherwise, all probability distributions are assumed to be absolutely continuous with respect to the Lebesgue measure, and therefore will be identified with their densities.

\smallskip

The objective of the filtering problem is to estimate the
posterior distribution of $X_t$ given the history $\clZ_t :=
\sigma(Z_s:  s \le t)$. The posterior is denoted by $p^*$, so
that for any measurable set $A\subset \R^d$,
\begin{equation}
\int_{A} p^*(x,t)\, \ud x   = {\sf P} \{ X_t \in A\mid \clZ_t \}.
\nonumber
\end{equation}
The evolution of  $p^*(x,t)$ is described by the Kushner--Stratonovich
(K-S) partial differential equation
\begin{equation}
\ud p^\ast = ( h-\hat{h} )(\ud Z_t - \hat{h} \ud t)p^\ast,
\label{eqn:Kushner_eqn}
\end{equation}
with initial condition $p_0^*$, where $ \hat{h}_t = \int h(x) p^*(x,t) \ud
x$. The theory of nonlinear
filtering is described in the classic monograph~\cite{kal80}.

\smallskip

Although our analysis is restricted to a particular model with a static
state process, it can be extended to broader classes of filtering
problems, subject to technical conditions discussed in
Remark~\ref{r:spectralBdds}.  The main technical condition concerns
the existence of a solution and certain a priori bounds for Poisson's
equation that also arises in simulation and optimization theory for
Markov models~\cite{glymey96a,MT}.  
For the model considered in this paper, bounds
are obtained based on a \textit{Poincar\'{e}}, or \textit{spectral gap}, \textit{inequality} (see
the bound  {\bf  PI}($\lambda_0$) in Assumption~A2).

\medskip

The contributions of this paper are two-fold: One, to show that the
dynamics of the K-S equation are a gradient flow for a certain
variational problem, with respect to the Kullback--Leibler divergence. Two,
  the variational problem is used  to derive the feedback particle filter,
first introduced in~\cite{taoyang_TAC12} (see
also~\cite{yanmehmey11,yanmehmey11b,yanlaumehmey12}).

\smallskip

\sloppypar The first part of the paper concerns the construction of the gradient flow.
The analysis is
inspired by the optimal transportation literature -- in particular,
the work of Otto and co-workers on the variational interpretation of
the Fokker--Planck--Kolmogorov equation~\cite{Jordan99thevariational}.  
The construction described in \Sec{sec:TS_prcdr} begins with a discrete-time recursion based on the successive solution of 
minimization problems involving the 
so-called forward variational representation of the elementary Bayes'
formula (see Mitter and Newton~\cite{MitterNewton04}).  Lemma~\ref{lem:EL} describes the first order optimality
condition for the variational problem at each time-step.

\smallskip

In the continuous-time limit, these first-order conditions yield the nonlinear
filter~\eqref{eqn:Kushner_eqn}, as described in the proof of
Theorem~\ref{thm:KS}.  The construction shows that the
dynamics of the nonlinear filter may be
regarded as a gradient flow, or a steepest descent, for a certain
energy functional
(``information value of the observation'' according to~\cite{MitterNewton04}) 
with respect to the Kullback-Leibler divergence pseudo-metric.

\smallskip

The feedback particle filter algorithm is obtained using similar analysis in
 \Sec{sec:FPF}.  This filter
is a controlled system, where the control  is obtained via 
consideration of the first order optimality conditions for the
variational problem.  Theorem~\ref{thm:exact} shows that the filter is exact,
i.e., the posterior distribution of the particle matches exactly the
true posterior $p^*$, provided the filter is initialized with the true
prior. 

\smallskip

The remainder of this paper is organized as follows. The time-stepping
procedure is introduced in \Sec{sec:TS_prcdr}, and properties of its
solution established.  The gradient flow
result -- convergence is the solution of the time-stepping procedure
to weak solution of the K-S equation~\eqref{eqn:Kushner_eqn} --
appears in \Sec{sec:gradient_flow}.  The feedback particle filter
algorithm appears in \Sec{sec:FPF}.  

\medskip

\paragraph{Notation:} $C^k$ is used to denote the space of $k$-times
continuously differentiable functions; $C^k_c$ denotes the subspace of
functions with compact support.  
$L^\infty$ is used to denote the
space of functions that are bounded a.e.\ (Lebesgue).

The space of probability densities with finite second moment is denoted
\begin{equation}
{\cal P} \doteq \left\{ \rho:\R^d\rightarrow [0,\infty) \,
    \text{measurable} \,\Big{\vert} \, \int_{\R^d} \rho(x) \ud x =
    1,\;\int x^2 \rho(x) \ud x < \infty 
\right\} .
\label{e:clP}
\end{equation}
$L^2(\R^d;\rho)$ denotes the Hilbert space of functions
on $\R^d$ that are square-integrable with respect to density
$\rho$;
$H^k(\R^d;\rho)$   denotes the Hilbert space of
functions whose first $k$ derivatives (defined in the weak or distributional sense)
are in $L^2(\R^d;\rho)$, and $H_0^1(\R^d;\rho) \doteq \{ \phi \in H^1(\R^d;\rho) \,\Big{\vert} \, \int \phi(x) \rho(x) \ud x = 0 \}$. 

For a function $f$, $\nabla f = \frac{\partial }{\partial x_i} f$ is
used to denote the gradient 
and $D^2 f = \frac{\partial^2 }{\partial x_i x_j} f$ is used to denote
the Hessian.  The derivatives are interpreted in the weak sense.  \qed

\newpage

\section{Time-Stepping Procedure}
\label{sec:TS_prcdr}

The time-stepping procedure involves a sequence of minimization
problems in the space of probability densities $\cal P$. 
We consider a finite time interval $[0,T]$ with an associated
discrete-time sequence $\{t_0, t_1, t_2,\hdots,t_N\}$ of sampling instants, with $t_0=0 < t_1 < \ldots < t_N = T$.
The corresponding increments are given by $\Delta t_n \doteq t_n - t_{n-1}, n = 1,\ldots,N$. 

A realization of the stochastic
process $Z_t$, the solution of SDE~\eqref{eqn:Obs_Process}, sampled at
discrete times is written as $\{Z_0, Z_1, Z_2,\hdots,Z_N\}$.  We use $\Delta
Z_n \doteq Z_n-Z_{n-1}$ to define the discrete-time observation process, and let
\[
Y_n \doteq \frac{\Delta Z_n}{\Delta t_n}.
\]
In discrete time, $Y_n$ 
is
 viewed as the observation made at time
$t_n$.  We eventually let $N\to\infty$ and simultaneously let $\barDelta_N\to 0$, where
\begin{equation}
\barDelta_N =\max\{ \Delta t_n :  n\le N \}\, .
\label{e:DeltaN}
\end{equation}

The elementary Bayes theorem is used to obtain the posterior
distribution, 
expressed recursively as
\begin{align}
\rho_0(x) & = p_0^*(x),\\
\rho_n(x) & = \frac{\rho_{n-1} (x) \exp(-\phi_n(x))}{\int \rho_{n-1}
  (y) \exp(-\phi_n(y)) \ud y}, 
\label{eq:Bayes_with_Yn}
\end{align}
where $\phi_n(x)\doteq\frac{\Delta t_n}{2} (Y_n - h(x))^2$. Note that the $\{\rho_n\}$ are \textit{random} probability measures since they depend on the discrete-time process $\{Z_n\}$. In particular, $\rho_n$ is measurable w.r.t.\ $\sigma(Z_i : i = 0,\ldots,n)$. This observation should be kept in mind when dealing with various parameters associated with the $\rho_n$, e.g., norm bounds for functions in $L^p(\R^d; \rho_n)$.

\medskip

The variational formulation of the Bayes recursion is the following
{\em time-stepping procedure}: Set $\rho_0 = p_0^* \in {\cal P}$  and
inductively define $\{\rho_n\}_{n=1}^{N} \subset {\cal P}$ by taking
$\rho_n \in {\cal P}$ to minimize the functional 
\begin{equation}
I_n(\rho)\doteq D(\rho \mid \rho_{n-1}) + \frac{\Delta t_n}{2} \int
\rho(x) (Y_n - h(x))^2 \ud x,
\label{eqn:obj_fn}
\end{equation}
where $D$ denotes the relative entropy or Kullback--Leibler divergence,
\[
D(\rho \mid \rho_{n-1}) =
\int \rho(x)
\ln \Bigl(\frac{\rho(x)}{\rho_{n-1} (x)} \Bigr) \ud x.
\]

The proof that $\rho_n$, as defined in~\eqref{eq:Bayes_with_Yn}, is in
fact the minimizer is straightforward: By Jensen's formula, $I_n(\rho)
\ge - \ln(\int\rho_{n-1}(y)\exp(-\phi_n(y))\ud y)$ with equality if
and only if $\rho=\rho_n$.   The optimizer $\rho_n$ is in fact the
``twisted distribution'' that arises in the theory of large deviations for empirical
means \cite{demzei98a}.   Although the optimizer is known,  a careful
look at the first order optimality equations associated with $\rho_n$
leads to i) the nonlinear filter~\eqref{eqn:Kushner_eqn} for evolution
of the posterior (in~\Sec{sec:gradient_flow}), and ii) a particle
filter algorithm for
approximation of the posterior (in~\Sec{sec:FPF}).

\medskip

Throughout the paper, the following assumptions are made for the prior
distribution $p_0^*$ and for function $h$:
\begin{romannum}
\item[{\textbf{\textit{Assumption~A1}}}]
The probability density $p_0^*\in {\cal P}$ is of the form $p_0^*(x) = e^{-\smopot_0(x)}$,
where $\smopot_0\in C^2$, 
$D^2 \smopot_0 \in L^\infty$,  and
$|\nabla \smopot_0|(x) \rightarrow \infty $ as
$|x|\rightarrow \infty$.
\notes{spm: reviewer was annoyed at redundancy: $\nabla \smopot_0(x) = O(|x|)$}

\item[{\textbf{\textit{Assumption~A2}}}]
The function $h \in C^2$ with $h,\nabla h, D^2h \in L^{\infty}$. 
\end{romannum}

Under assumption~A1, the density $\rho_0=p_0^*$
is known to admit a spectral gap (or Poincar\'{e} inequality) 
\cite{BakryPoincare}: That
is, 
for some $\lambda_0>0$, and
for all functions $f \in H^1(\R^d;\rho_0)$ with $\int \rho_0 f \ud x = 0$,
$$
\int |f(x)|^2 \rho_0(x) \ud x \le \frac{1}{\lambda_0}
	 \int		|\nabla f(x)|^2 \rho_0(x) \ud x. \eqno{[\text{{\bf  PI}($\lambda_0$)}]}
$$

The following proposition shows that the minimizers all admit a
uniform spectral gap. 
 The proof appears in the Appendix~\ref{proof:prop:exist_uniq}. 

\begin{proposition}
\label{prop:exist_uniq}
Under Assumption~(A1)-(A2),

\noindent (i) The minimizer $\rho_n$ is of the form $\rho_n = e^{-\smopot_n(x)}$,
where $\smopot_n\in C^2$.  These functions admit the following bounds, uniformly in $n$: 
$\nabla \smopot_n (x) = O(|x|)$, 
$|\nabla \smopot_n| (x) \rightarrow \infty$ as
$|x|\rightarrow \infty$, and $D^2 \smopot_n \in L^\infty$.  
\notes{spm: uniformity ok?  Did I miss something?}

\noindent (ii)  Suppose $f\in L^2(\R^d;\rho_{n-1})$.
Then $f\in
L^2(\R^d;\rho_{n})$ with
\begin{equation}
\int \rho_n(x) |f(x)|^2 \ud x \le C \exp(\alpha |\Delta Z_n|) \int \rho_{n-1}(x) |f(x)|^2 \ud x,
\label{eq:nm1_2_n_bound}
\end{equation}
where the constants $C$, $\alpha$ are uniformly bounded in $n$ and $N$.

\noindent (iii) 
The ratio $\frac{\rho_n}{\rho_{n-1}}\in H^1(\R^d;\rho_{n-1})$.

\noindent (iv) 
There exists $\bar{\lambda}>0$, such that $\rho_n$ satisfies {\bf PI}($\bar{\lambda}$) for each $n$.

\qed
\end{proposition}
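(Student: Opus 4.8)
The plan rests on a single algebraic observation that makes the whole recursion transparent. Writing out the likelihood $\phi_n(x) = \frac{\Delta t_n}{2}(Y_n - h(x))^2$ and using $\Delta t_n Y_n = \Delta Z_n$, one finds
\[
\phi_n(x) = \frac{(\Delta Z_n)^2}{2\Delta t_n} - \Delta Z_n\, h(x) + \frac{\Delta t_n}{2}h(x)^2,
\]
in which the first term is independent of $x$ and therefore cancels against the normalization in~\eqref{eq:Bayes_with_Yn}. Hence, setting $\psi_n(x) := \Delta Z_n\, h(x) - \frac{\Delta t_n}{2}h(x)^2$, the recursion becomes $\rho_n = \rho_{n-1}e^{\psi_n}/\int\rho_{n-1}e^{\psi_n}\ud y$, and telescoping from $\rho_0$ gives the closed form
\[
\rho_n(x) = \frac{\rho_0(x)\,e^{\Psi_n(x)}}{\int \rho_0(y)e^{\Psi_n(y)}\ud y}, \qquad \Psi_n(x) := (Z_n - Z_0)h(x) - \frac{t_n}{2}h(x)^2,
\]
where I used $\sum_{k\le n}\Delta Z_k = Z_n - Z_0$ and $\sum_{k\le n}\Delta t_k = t_n$. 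The decisive point is that, because $h\in L^\infty$ by A2 and $|Z_n - Z_0|$ and $t_n$ are bounded on $[0,T]$ (the former by $M := \max_{t\le T}|Z_t - Z_0| < \infty$ a.s.), the cumulative tilt $\Psi_n$ is bounded \emph{uniformly in $n$}, with $\|\Psi_n\|_\infty \le M\|h\|_\infty + \frac{T}{2}\|h\|_\infty^2$. Every claim then follows from this form together with A1--A2.

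For (i): taking logarithms in $\rho_n = e^{-\smopot_n}$ gives $\smopot_n = \smopot_0 + \sum_{k\le n}\phi_k + \text{const}$, which lies in $C^2$ since $\smopot_0, h\in C^2$. Differentiating and telescoping as above, I would obtain $\nabla\smopot_n = \nabla\smopot_0 + \bigl(t_n h(x) - (Z_n - Z_0)\bigr)\nabla h(x)$ and a comparable expression for $D^2\smopot_n$ in terms of $D^2\smopot_0$, $\nabla h$, $D^2 h$, and $h$. Since $h, \nabla h, D^2 h \in L^\infty$ and the scalar prefactors are bounded uniformly in $n$, the corrections to $\nabla\smopot_0$ and $D^2\smopot_0$ are bounded uniformly in $n$; combined with the A1 properties $\nabla\smopot_0 = O(|x|)$, $|\nabla\smopot_0|\to\infty$, and $D^2\smopot_0\in L^\infty$, this yields all three asserted bounds uniformly in $n$.

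Parts (ii) and (iii) are immediate from the $\psi_n$-form. Since $\int\rho_n|f|^2\ud x = (\int\rho_{n-1}e^{\psi_n}\ud y)^{-1}\int\rho_{n-1}e^{\psi_n}|f|^2\ud x$, bounding $e^{\psi_n}\le e^{|\Delta Z_n|\|h\|_\infty}$ in the numerator and $\int\rho_{n-1}e^{\psi_n}\ud y\ge e^{-|\Delta Z_n|\|h\|_\infty - \frac{T}{2}\|h\|_\infty^2}$ in the denominator gives~\eqref{eq:nm1_2_n_bound} with $C = e^{\frac{T}{2}\|h\|_\infty^2}$ and $\alpha = 2\|h\|_\infty$, both depending only on $\|h\|_\infty$ and $T$, hence uniform in $n$ and $N$. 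For (iii), $\frac{\rho_n}{\rho_{n-1}} = e^{\psi_n}/\int\rho_{n-1}e^{\psi_n}\ud y$ is $C^2$ and bounded, with gradient $\propto e^{\psi_n}\nabla\psi_n = e^{\psi_n}(\Delta Z_n - \Delta t_n h)\nabla h$ also bounded; a bounded function with bounded gradient lies in $H^1(\R^d;\rho_{n-1})$ because $\rho_{n-1}$ is a probability density.

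Finally (iv), which I expect to be the main obstacle, follows from the Holley--Stroock perturbation principle: if $\mu$ satisfies \textbf{PI}($\lambda$) and $\ud\nu \propto e^{V}\ud\mu$ with $V$ bounded, then $\nu$ satisfies \textbf{PI}($\lambda e^{-c\,\mathrm{osc}(V)}$) for an absolute constant $c$. The tempting route is to apply this across a single step $\rho_{n-1}\to\rho_n$ and iterate, but this fails badly: the per-step oscillations $\mathrm{osc}(\psi_n)\approx |\Delta Z_n|$ sum to the total variation of the observation path, which diverges as $N\to\infty$. The resolution is precisely the closed form above: I would apply Holley--Stroock \emph{once}, comparing $\rho_n$ directly to the fixed base measure $\rho_0$, which satisfies \textbf{PI}($\lambda_0$) by A1. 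Since $\mathrm{osc}(\Psi_n)\le 2\bigl(M\|h\|_\infty + \frac{T}{2}\|h\|_\infty^2\bigr)$ is bounded uniformly in $n$, this yields \textbf{PI}($\bar\lambda$) with $\bar\lambda = \lambda_0\, e^{-2c(M\|h\|_\infty + \frac{T}{2}\|h\|_\infty^2)} > 0$ independent of $n$, as claimed.
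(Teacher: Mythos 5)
Your proposal is correct and follows essentially the same route as the paper: the closed-form cumulative tilt $\rho_n \propto \rho_0 e^{\Psi_n}$, telescoped derivatives for (i), the single-step oscillation bound for (ii)--(iii), and a one-shot bounded-perturbation comparison of $\rho_n$ against $\rho_0$ for (iv) are all exactly the paper's steps, including the key insight that the Poincar\'e constant must be transferred directly from $\rho_0$ rather than iterated step-by-step. The only cosmetic differences are that you invoke Holley--Stroock as a known principle where the paper proves the perturbation inequality inline (with constant $c=1$), and your constant $C$ in (ii) uses $T$ where the paper uses the sharper $\barDelta_N$; both are uniform in $n$ and $N$.
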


\medskip

The sequence of minimizers $\{\rho_n\}$ is used to construct, via a
piecewise-constant interpolation, a density
function $\rho^{(N)}(x,t)$ for $t\in[0,T]$: 
Define $\rho^{(N)}(x,t)$ by setting $\rho^{(N)}(x,t_n) = \rho_{n}(x)$, and
taking $\rho^{(N)}$ to be constant on each time interval $[t_{n-1}, t_n)$
for $n=1,2,\hdots,N$.  

The following section is concerned with convergence analysis for the limit, as
$\barDelta_N \rightarrow 0$.  Before describing the analysis, we
present a few preliminaries concerning a certain Poisson's equation.  
This equation is fundamental to both the nonlinear filter
(in~\Sec{sec:gradient_flow}) and the particle
filter algorithm (in~\Sec{sec:FPF}).

\subsection{Poisson's Equation}  

We are interested in obtaining a solution $\phi$ of Poisson's equation,
\begin{equation}
\begin{aligned}
\nabla \cdot (\rho(x) \nabla \phi (x) ) & = - (g(x)-\hat{g})\rho(x),
\\
\int \phi(x) \rho(x) \ud x & = 0,
\label{eqn:EL_phi}
\end{aligned}
\end{equation}
where $\rho>0$ is a given density, $g$ is a given function, and 
$\hat{g} = \int g(x) \rho(x) \ud x$.

The terminology is motivated by Poisson's equation that arises in the theory of Markov processes \cite{glymey96a,MT}.  Consider the normalized \textit{Smoluchowski equation}, defined as the perturbed gradient flow w.r.t.\ a potential $U\colon\R^d\to\R^d$:
\[
\ud \Phi_t =-\nabla U(\Phi_t)\, \ud t  + \sqrt{2} \ud W_t.
\]
Its differential generator is the second-order operator, defined for $C^2$ functions by $\gen \phi = -(\nabla U)\cdot \nabla \phi +   \triangle \phi$.  On taking $U=-\ln(\rho)$,  the first equation in \eqref{eqn:EL_phi} becomes the usual Poisson's equation for diffusions,
\[
\gen\phi\, (x) =
- (g(x)-\hat{g}).
\]  
This interpretation is appealing, but will not be needed in subsequent analysis.  We henceforth consider solutions to \eqref{eqn:EL_phi} in a purely analytical setting.

Let $H_0^1(\R^d;\rho) \doteq \{ \phi \in H^1(\R^d;\rho) \,\Big{\vert} \, \int \phi(x) \rho(x) \ud x = 0 \}$. 
A function $\phi \in H_0^1(\R^d;\rho)$ is said to be a weak solution
of Poisson's equation~\eqref{eqn:EL_phi} if
\begin{equation}
\int \nabla \phi (x) \cdot \nabla \psi(x) \rho(x) \ud x = \int  (g(x)-\hat{g}) \psi(x) \rho(x) \ud x,\label{eqn:EL_phi_weak}
\end{equation}
for all $\psi \in H^1(\R^d;\rho)$.

The existence-uniqueness result for the weak solution of Poisson's equation is
described next;  its proof is given in the Appendix~\ref{Appdx:Poisson}.

\begin{theorem}\label{thm:thm1}
Suppose $\rho(x)=e^{-\smopot(x)}$ satisfies {\bf PI}($\lambda$).  

\noindent (i) If $g\in L^2(\R^d;\rho)$, then there exists a unique weak solution
$\phi \in H_0^1(\R^d;\rho)$ satisfying
\eqref{eqn:EL_phi_weak}. Moreover, the derivatives of the solution are
controlled by the size of the data:
\begin{equation}
\int |\nabla\phi|^2 \rho(x) \ud x  \le \frac{1}{\lambda} \int |g -
\hat{g}|^2 \rho(x) \ud x.\label{eqn:bound1}
\end{equation}
\noindent (ii) If $g\in H^1(\R^d;\rho)$ and $D^2 \smopot \in L^\infty$,
then the weak solution has higher regularity: $\phi \in H^2(\R^d;\rho)$ with
\begin{equation}
\int  \left| D^2 \phi \right|^2  \rho(x) \ud x  \le
C(\lambda;\rho) \int |\nabla g|^2 \rho(x) \ud x,
\label{eqn:bound2}
\end{equation}
where $C(\lambda;\rho)= \lambda^{-2} \big( \lambda +  \lVert D^2(\smopot)
    \rVert_{L^\infty} \big)$.
 \qed\end{theorem}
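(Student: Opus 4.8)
The plan is to dispatch part (i) with the standard variational machinery (Lax--Milgram, or here just Riesz representation since the form is symmetric), and then obtain part (ii) from a Bochner, or $\Gamma_2$, integration-by-parts identity, bootstrapping off the gradient estimate \eqref{eqn:bound1}. For part (i), I would equip $H_0^1(\R^d;\rho)$ with the symmetric bilinear form $a(\phi,\psi)=\int\nabla\phi\cdot\nabla\psi\,\rho\,\ud x$. The Poincar\'e inequality {\bf PI}($\lambda$) makes $a$ coercive on the mean-zero subspace, since $\|\phi\|_{H^1(\R^d;\rho)}^2\le(1+\lambda^{-1})\,a(\phi,\phi)$, so $a$ is an inner product whose norm is equivalent to the $H^1(\R^d;\rho)$ norm and $(H_0^1(\R^d;\rho),a)$ is a Hilbert space. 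Next I observe that it suffices to verify \eqref{eqn:EL_phi_weak} for $\psi\in H_0^1(\R^d;\rho)$, because replacing $\psi$ by $\psi-\int\psi\rho\,\ud x$ changes neither side: constants are annihilated by the gradient on the left and integrate against $(g-\hat g)\rho$ to zero on the right. The functional $L(\psi)=\int(g-\hat g)\psi\,\rho\,\ud x$ is bounded on this space, as Cauchy--Schwarz followed by {\bf PI}($\lambda$) on the mean-zero $\psi$ gives $|L(\psi)|\le\lambda^{-1/2}\|g-\hat g\|_{L^2(\R^d;\rho)}\,a(\psi,\psi)^{1/2}$. Riesz representation then yields a unique $\phi\in H_0^1(\R^d;\rho)$ with $a(\phi,\psi)=L(\psi)$, and testing with $\psi=\phi$ followed by Cauchy--Schwarz and {\bf PI}($\lambda$) delivers \eqref{eqn:bound1}.

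For part (ii), the engine is the Bochner identity for the weighted generator $\gen\phi=\Delta\phi-\nabla\smopot\cdot\nabla\phi$,
\[
\int(\gen\phi)^2\rho\,\ud x=\int|D^2\phi|^2\rho\,\ud x+\int\langle D^2\smopot\,\nabla\phi,\nabla\phi\rangle\,\rho\,\ud x .
\]
Granting this identity, since the weak solution satisfies $\gen\phi=-(g-\hat g)$, I would rearrange and estimate the curvature term using $D^2\smopot\in L^\infty$ to get
\[
\int|D^2\phi|^2\rho\,\ud x\le\int|g-\hat g|^2\rho\,\ud x+\|D^2\smopot\|_{L^\infty}\int|\nabla\phi|^2\rho\,\ud x .
\]
Applying {\bf PI}($\lambda$) to the mean-zero function $g-\hat g$ bounds the first term by $\lambda^{-1}\int|\nabla g|^2\rho\,\ud x$, while chaining \eqref{eqn:bound1} with {\bf PI}($\lambda$) gives $\int|\nabla\phi|^2\rho\,\ud x\le\lambda^{-1}\int|g-\hat g|^2\rho\,\ud x\le\lambda^{-2}\int|\nabla g|^2\rho\,\ud x$. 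Combining the two bounds yields exactly $\int|D^2\phi|^2\rho\,\ud x\le\lambda^{-2}\bigl(\lambda+\|D^2\smopot\|_{L^\infty}\bigr)\int|\nabla g|^2\rho\,\ud x$, which is \eqref{eqn:bound2} with the stated constant $C(\lambda;\rho)$.

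The main obstacle is the rigorous justification of the Bochner identity, because a priori the solution only lies in $H_0^1(\R^d;\rho)$: neither the Hessian $D^2\phi$ nor the double integration by parts used to derive the identity is legitimate at the outset, and that regularity is precisely what we are trying to establish. I would handle this by first proving the identity, and hence the estimate, for a dense class of smooth functions (for instance $C_c^\infty(\R^d)$, or smooth functions with rapid decay), where all manipulations are valid and the boundary terms at infinity vanish; here the confining hypothesis $|\nabla\smopot|(x)\to\infty$ as $|x|\to\infty$, together with $D^2\smopot\in L^\infty$, is what controls the weighted integrals and kills those boundary contributions. One then regularizes the data $g$ (and the approximate solution), derives the $H^2(\R^d;\rho)$ bound uniformly in the regularization from the identity, and passes to the limit using closedness of the weighted Hessian form; the finiteness of $\int|D^2\phi|^2\rho\,\ud x$ obtained in the limit upgrades $\phi$ to $H^2(\R^d;\rho)$ and secures \eqref{eqn:bound2}. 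Verifying the density and the vanishing of boundary terms in these weighted spaces is the only delicate point, since the algebra of the estimate is immediate once the identity is in hand.
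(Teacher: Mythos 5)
Your part (i) is exactly the paper's argument: the Dirichlet form as an inner product on $H_0^1(\R^d;\rho)$, boundedness of the linear functional via {\bf PI}($\lambda$), Riesz representation, trivial extension to constant test functions, and \eqref{eqn:bound1} by testing with $\psi=\phi$. For part (ii), your Bochner/$\Gamma_2$ identity is, after rearrangement, precisely the estimate the paper proves as \eqref{eqn:h2bound}: since the weak form \eqref{eqn:EL_phi_weak} with $\psi=g$ gives $\int\nabla\phi\cdot\nabla g\,\rho\,\ud x=\int|g-\hat g|^2\rho\,\ud x$, the paper's bound $\int|D^2\phi|^2\rho\,\ud x\le\int\nabla\phi\cdot G\,\rho\,\ud x$ with $G=D^2(\log\rho)\,\nabla\phi+\nabla g$ is identical to your inequality $\int|D^2\phi|^2\rho\,\ud x\le\int|g-\hat g|^2\rho\,\ud x+\lVert D^2\smopot\rVert_{L^\infty}\int|\nabla\phi|^2\rho\,\ud x$ (the paper then uses Cauchy--Schwarz where you use the identity directly, but both routes yield the same constant $C(\lambda;\rho)=\lambda^{-2}(\lambda+\lVert D^2\smopot\rVert_{L^\infty})$). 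So the algebra is right, and the two proofs establish the same estimate.

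The genuine gap is in how you justify that estimate for the actual weak solution, and your proposed fix does not work as stated. You prove the identity on $C_c^\infty$ and then want to ``regularize the data $g$ (and the approximate solution)'' and pass to the limit; but regularizing the data attacks the wrong difficulty. Even for $g_\epsilon\in C_c^\infty$, the corresponding solution $\phi_\epsilon$ of Poisson's equation is neither compactly supported nor known to decay, so your dense-class identity applies to $\phi_\epsilon$ no more than to $\phi$: the circularity --- no a priori Hessian control and no control of boundary terms at infinity \emph{for the solution} --- persists at every stage of the regularization. To rescue an approximation scheme of this type you would need to approximate the \emph{solution} in the graph norm of $\gen$, i.e., invoke essential self-adjointness of the weighted Laplacian on $C_c^\infty$ (or an equivalent density theorem in the weighted space), a nontrivial input you have not supplied. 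The paper resolves the issue differently and completely: interior elliptic regularity upgrades the weak solution to $H^3_{\rm loc}(\R^d)$, so the differentiated equation \eqref{eqn:pointwisederiv} holds pointwise; testing it against $\beta(sx)^2\,\partial\phi/\partial x_k$ for a fixed compactly supported bump $\beta$ and letting $s\to0$ makes the cutoff errors of order $s$, and monotone convergence delivers \eqref{eqn:h2bound} as a one-sided inequality, which is all that is needed. If you replace your regularization step with this interior-regularity-plus-cutoff argument (or supply the self-adjointness theorem), your proof closes.
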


\medskip

\section{Nonlinear Filter} 
\label{sec:gradient_flow}

The analysis proceeds by first obtaining the first variation as
described in the following Lemma.  The proof appears in the Appendix~\ref{proof:lemma:EL}.

\begin{lemma}[First-order optimality condition]\label{lem:EL}
Consider the minimization problem~\eqref{eqn:obj_fn} under
Assumptions~(A1)-(A2).  
The minimizer $\rho_n$ satisfies the Euler-Lagrange equation
\begin{align}
\int \rho_{n} \big[ - \nabla \smopot_{n} \cdot \varsigma+ \nabla \smopot_{n-1} 
\cdot \varsigma - (\Delta Z_{n} - h \Delta t_n) \nabla h \cdot \varsigma \big] \ud x = 0
\label{eqn:EL-density}
\end{align}
for each vector field $\varsigma \in L^2(\R^d \to \R^d;\rho_{n-1})$.
\qed
\end{lemma}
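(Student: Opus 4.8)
The plan is to obtain the Euler--Lagrange equation by computing the first variation of $I_n$ along a one-parameter family of admissible densities generated by the test vector field $\varsigma$, and exploiting that $\rho_n$ is the minimizer. For a smooth, compactly supported $\varsigma$ (the general case is handled at the end by density), I would set $T_\epsilon(x) = x + \epsilon\,\varsigma(x)$, a diffeomorphism of $\R^d$ for $|\epsilon|$ small, and define the pushforward density $\rho^\epsilon \doteq (T_\epsilon)_\sharp\,\rho_n$. Then $\rho^\epsilon \in {\cal P}$, $\rho^0 = \rho_n$, and since $\rho_n$ minimizes $I_n$ we must have $\frac{d}{d\epsilon} I_n(\rho^\epsilon)\big|_{\epsilon=0} = 0$. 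The whole computation reduces to differentiating at $\epsilon=0$, pulling every integral back to $\rho_n$ via the change of variables $y = T_\epsilon(x)$, whose Jacobian is $J_\epsilon(x) = \det(I + \epsilon\,D\varsigma(x))$.

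I would evaluate the three contributions separately. For the observation term the change of variables gives $\int\rho^\epsilon (Y_n-h)^2\ud x = \int \rho_n (Y_n - h\circ T_\epsilon)^2\ud x$; differentiating and using $Y_n\Delta t_n = \Delta Z_n$ produces $-\int \rho_n(\Delta Z_n - h\,\Delta t_n)\,\nabla h\cdot\varsigma\ud x$. Splitting the divergence as $D(\rho^\epsilon\mid\rho_{n-1}) = \int\rho^\epsilon\ln\rho^\epsilon\ud x - \int\rho^\epsilon\ln\rho_{n-1}\ud x$, the cross term pulls back to $-\int\rho_n\ln\rho_{n-1}(T_\epsilon)\ud x$, whose derivative is $\int\rho_n\nabla\smopot_{n-1}\cdot\varsigma\ud x$ since $\ln\rho_{n-1} = -\smopot_{n-1}$. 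For the entropy term, the identity $\rho^\epsilon(T_\epsilon(x))\,J_\epsilon(x) = \rho_n(x)$ turns it into $\int\rho_n\ln\rho_n\ud x - \int\rho_n\ln J_\epsilon\ud x$; using $\frac{d}{d\epsilon}\ln J_\epsilon\big|_{\epsilon=0} = \tr(D\varsigma) = \nabla\cdot\varsigma$ and then integrating by parts yields $-\int\rho_n\nabla\smopot_n\cdot\varsigma\ud x$. Adding the three pieces reproduces exactly the bracketed integrand of~\eqref{eqn:EL-density}, so the identity holds for every smooth compactly supported $\varsigma$.

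I would then extend to all $\varsigma\in L^2(\R^d\to\R^d;\rho_{n-1})$ by density, noting that each term of~\eqref{eqn:EL-density} is a bounded linear functional of $\varsigma$ on that space. By Cauchy--Schwarz it suffices that $\nabla\smopot_n$, $\nabla\smopot_{n-1}$, and $(\Delta Z_n - h\,\Delta t_n)\nabla h$ lie in $L^2(\rho_n)$, and that the $L^2(\rho_n)$ norm of $\varsigma$ is controlled by its $L^2(\rho_{n-1})$ norm. The former follows from Proposition~\ref{prop:exist_uniq}(i) --- the growth bound $\nabla\smopot_n(x) = O(|x|)$ together with the finite second moment of $\rho_n$, and the boundedness of $\nabla h$ from Assumption~A2 --- while the norm comparability is precisely the estimate~\eqref{eq:nm1_2_n_bound} of Proposition~\ref{prop:exist_uniq}(ii). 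Since smooth compactly supported vector fields are dense in $L^2(\rho_{n-1})$, continuity of each functional closes the argument.

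I expect the routine-looking but genuinely delicate step to be the rigorous justification of the first variation: interchanging $\frac{d}{d\epsilon}$ with the integrals requires a dominated-convergence bound that uses the growth estimates of Proposition~\ref{prop:exist_uniq}(i) (so that $\nabla\smopot_n\cdot\varsigma$ and $\nabla\smopot_{n-1}\cdot\varsigma$ are integrable against $\rho_n$), and the integration by parts in the entropy term must produce no boundary contribution, which follows from the rapid decay of $\rho_n = e^{-\smopot_n}$ forced by $|\nabla\smopot_n|(x)\to\infty$. As an independent check, the explicit minimizer~\eqref{eq:Bayes_with_Yn} gives $\smopot_n = \smopot_{n-1} + \phi_n + \text{const}$, hence $\nabla\smopot_n - \nabla\smopot_{n-1} = \nabla\phi_n = -(\Delta Z_n - h\,\Delta t_n)\nabla h$, which makes the bracketed integrand in~\eqref{eqn:EL-density} vanish pointwise; this confirms the conclusion and shows the variational derivation is consistent with the closed-form solution.
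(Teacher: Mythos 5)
Your proposal is correct and follows essentially the same route as the paper's proof: generate a variation by pushing forward the minimizer $\rho_n$ along a perturbation induced by $\varsigma$, differentiate the three terms of $I_n$ using the Jacobian identity, and extend from $C^1_c$ to $L^2(\R^d\to\R^d;\rho_{n-1})$ by density via Proposition~\ref{prop:exist_uniq}. The only difference is cosmetic---you perturb with $T_\epsilon(x)=x+\epsilon\varsigma(x)$ while the paper uses the flow $\Phi_\tau$ of $\varsigma$, which agree to first order---and your closing consistency check against the explicit minimizer \eqref{eq:Bayes_with_Yn} is exactly the remark the paper appends after its proof.
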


\medskip

We are now prepared to state the main theorem concerning
the limit of the sequence of densities $\{\rho^{(N)}(x,t)\}$.
For the purpose of the proof, an alternate form of the
E-L equation is more useful.  For a given function 
$g\in L^2(\R^d;\rho_{n-1})$,
\notes{Reviewer wanted this notation.  Is it ok with you?  -spm}
let $\varsigma \in L^2(\R^d\to \R^d;\rho_{n-1})$ denote the weak solution (in gradient form) of
\begin{equation}
\nabla \cdot \left( \rho_{n-1}(x) \varsigma (x) \right) = - \left( g(x)
- \int \rho_{n-1}(x) g(x) \ud x \right)\rho_{n-1}(x).
\label{eq:varsigmadefn}
\end{equation}
Such a solution exists by Theorem~\ref{thm:thm1}~(i).  The E-L
equation~\eqref{eqn:EL-density} can then be expressed as
\begin{equation}
\int \rho_n(x) g(x) \ud x =  \int \rho_{n-1}(x) g(x) \ud x + \int
\rho_{n}(x) \left[ \Delta Z_{n} - h(x) \Delta t_n \right] \nabla h(x) \cdot
\varsigma (x) \ud x.
\label{eqn:el_appdx_pf1}
\end{equation}
The derivation of~\eqref{eqn:el_appdx_pf1}
from~\eqref{eqn:EL-density}-\eqref{eq:varsigmadefn} appears in
Appendix~\ref{appdx:derivationof-eqn:el_appdx_pf1}.  

Let us suppose now $\Delta t_n \rightarrow 0$ uniformly, so that
$\barDelta_N\to 0$ as $N\to\infty$,  where the maximum step size
$\barDelta_N$ was introduced in \eqref{e:DeltaN}.  
Based on the proof of Prop.~\ref{prop:exist_uniq}, there exists a
limit, denoted as $\rho(x,t)$, such that $\rho^{(N)}(x,t) \rightarrow
\rho(x,t)$ pointwise for a fixed sample path, and in the $L^2$ sense
over all sample paths. 
In fact for the special case of the signal
process~\eqref{eqn:Signal_Process} considered in this paper, the
limiting density is given by the following explicit formula:
\begin{equation}
\rho(x,t) \doteq (\text{const.}) \exp \left( h(x) \, (Z_t-Z_0) - \frac{1}{2}
|h(x)|^2 \, t \right) \rho_0(x).
\label{eqn:rho_limit_explicit}
\end{equation}
The convergence argument appears in Appendix~\ref{Appdx:Convergence}. 
   
The proof of the following theorem appears in
Appendix~\ref{proof:thm:KS}.  Notationally, $\langle f,\rho_t\rangle  \doteq \int
f(x) \rho(x,t) \ud x$ and $\hat{h}_t \doteq \int h(x) \rho(x,t) \ud x$.

\begin{theorem}\label{thm:KS}
The density $\rho$ is a weak solution of the nonlinear filter with prior $\rho_0=p_0^*$.  That is, for any test function $f \in C_c (\R^d)$,
\begin{equation}
\langle f,\rho_t\rangle  = \langle f,\rho_0\rangle  
+ \int_0^t 
		\langle(h - \hat{h}_s) (\ud Z_s -\hat{h}_s \ud s) f, \rho_s \rangle.
\label{eqn:KS}
\end{equation}
\qed
\end{theorem}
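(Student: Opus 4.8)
The plan is to telescope the Euler--Lagrange recursion \eqref{eqn:el_appdx_pf1} with the choice $g=f$ and pass to the continuous-time limit. For each $n$ let $\varsigma_n=\nabla\phi_n$ be the gradient solution of \eqref{eq:varsigmadefn} with data $f-\langle f,\rho_{n-1}\rangle$, which exists by Theorem~\ref{thm:thm1}~(i) since $f\in C_c\subset L^2(\R^d;\rho_{n-1})$, and write $\hat h_{n-1}\doteq\langle h,\rho_{n-1}\rangle$. Summing \eqref{eqn:el_appdx_pf1} over the steps up to time $t$ gives
\[
\langle f,\rho_N\rangle-\langle f,\rho_0\rangle
=\sum_{n}\Delta Z_n\int\rho_n\,\nabla h\cdot\varsigma_n\,\ud x
-\sum_{n}\Delta t_n\int\rho_n\,h\,\nabla h\cdot\varsigma_n\,\ud x .
\]
The second sum is a Riemann sum: replacing $\rho_n$ by $\rho_{n-1}$ (an error that is $O(\Delta t_n^{3/2})$ after multiplication by $\Delta t_n$, summable to zero via \eqref{eq:nm1_2_n_bound}) and testing the weak form \eqref{eqn:EL_phi_weak} against $\psi=\tfrac12 h^2$, each summand equals $\tfrac12\langle (f-\langle f,\rho_{n-1}\rangle) h^2,\rho_{n-1}\rangle$, so that, using $\rho^{(N)}\to\rho$, the sum converges to $\int_0^t \tfrac12\langle (f-\langle f,\rho_s\rangle) h^2,\rho_s\rangle\,\ud s$.

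The heart of the argument is the first, stochastic, sum, where the scaling $\Delta Z_n\sim\sqrt{\Delta t_n}$ forbids naively replacing the integrating weight $\rho_n$ by $\rho_{n-1}$. I would split $\rho_n=\rho_{n-1}+(\rho_n-\rho_{n-1})$. Testing \eqref{eqn:EL_phi_weak} against $\psi=h$, the $\rho_{n-1}$ part of each summand equals the covariance $\langle (h-\hat h_{n-1})f,\rho_{n-1}\rangle$, and summed against $\Delta Z_n$ this converges to the It\^o integral $\int_0^t\langle (h-\hat h_s)f,\rho_s\rangle\,\ud Z_s$. For the remaining part I would use the Bayes update \eqref{eq:Bayes_with_Yn}, whose Taylor expansion (after cancelling the $x$-independent factor in the normalization) yields $\rho_n-\rho_{n-1}=\rho_{n-1}(h-\hat h_{n-1})\Delta Z_n+O(\Delta t_n)$; substituting this and invoking the quadratic-variation identity $(\Delta Z_n)^2\to\Delta t_n$ converts $\sum_n\Delta Z_n\int(\rho_n-\rho_{n-1})\nabla h\cdot\varsigma_n$ into the drift $\int_0^t\!\int\rho_s(h-\hat h_s)\nabla h\cdot\varsigma_s\,\ud x\,\ud s$, which by testing \eqref{eqn:EL_phi_weak} against $\psi=\tfrac12 h^2-\hat h_s h$ equals $\int_0^t\langle(f-\langle f,\rho_s\rangle)(\tfrac12 h^2-\hat h_s h),\rho_s\rangle\,\ud s$.

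Collecting the three limits, the $\tfrac12\langle (f-\langle f,\rho_s\rangle)h^2,\rho_s\rangle$ contributions from the Riemann sum and from the It\^o correction cancel, and what survives is exactly the drift $-\int_0^t\hat h_s\langle(h-\hat h_s)f,\rho_s\rangle\,\ud s$; together with the It\^o integral this is precisely \eqref{eqn:KS}. The main obstacle is making the It\^o correction rigorous: one must show that the bracketed $O(\Delta t_n)$ remainders and the cubic terms in the expansion of $\rho_n-\rho_{n-1}$, together with the fluctuations $(\Delta Z_n)^2-\Delta t_n$, vanish in the $L^2$ sense over sample paths as $\barDelta_N\to0$. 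This is where the estimates uniform in $n,N$ from Proposition~\ref{prop:exist_uniq} -- the uniform Poincar\'e constant $\bar\lambda$ and the weight-transfer bound \eqref{eq:nm1_2_n_bound} -- and the Hessian bound \eqref{eqn:bound2} of Theorem~\ref{thm:thm1} applied to $\varsigma_n=\nabla\phi_n$ (using $h,\nabla h,D^2h\in L^\infty$ from Assumption~A2) are indispensable for controlling $\int\rho_{n-1}|\varsigma_n|^2$ and $\int\rho_{n-1}|\nabla h\cdot\varsigma_n|^2$ uniformly.
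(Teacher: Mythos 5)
Your proposal is correct, and it shares the paper's skeleton -- telescoping the Euler--Lagrange identity \eqref{eqn:el_appdx_pf1} with $g=f$ and the Poisson solution $\xi_n=\varsigma_n$ of \eqref{eq:varsigmadefn}, then splitting the result into an It\^o sum plus a quadratic-variation drift -- but it departs from the paper at the key technical step. The paper never touches the closed-form Bayes update in this proof: to move the integration weight from $\rho_n$ back to the adapted weight $\rho_{n-1}$, it re-applies the E-L equation \eqref{eqn:el_appdx_pf1} three more times, with data $h\nabla h\cdot\xi_n$, $\nabla h\cdot\xi_n$ and $\nabla h\cdot\zeta_n$, introducing auxiliary Poisson solutions $\eta_n,\zeta_n,\varphi_n$ and their a priori bounds to generate the error terms ${\cal E}_n^{(1)},{\cal E}_n^{(2)},{\cal E}_n^{(3)}=O(\barDelta_N^{3/2})$; the quadratic-variation drift then emerges from the $(\Delta Z_n)^2$ term via integration by parts against \eqref{eqn:defn_zeta}. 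You instead split $\rho_n=\rho_{n-1}+(\rho_n-\rho_{n-1})$ and Taylor-expand the explicit minimizer \eqref{eq:Bayes_with_Yn} to get $\rho_n-\rho_{n-1}=\rho_{n-1}(h-\hat{h}_{n-1})\Delta Z_n+O(\Delta t_n)$, then evaluate everything by choosing explicit test functions ($\psi=h$, $\psi=\tfrac12 h^2$, $\psi=\tfrac12 h^2-\hat{h}h$, all in $H^1$ by Assumption A2) in the weak form \eqref{eqn:EL_phi_weak}; your final cancellation of the $\tfrac12 h^2$ terms reproduces exactly the drift $-\hat{h}_s\langle(h-\hat{h}_s)f,\rho_s\rangle$, matching \eqref{eqn:KS}. (The paper's Step~4(i) limit is in fact your $\psi=\tfrac12 h^2$ identity in disguise.) What each buys: your route is more elementary -- one Poisson problem per step instead of four, with error control coming from the density-ratio bound \eqref{eq:ratio_ineq} rather than repeated Poisson a priori estimates -- whereas the paper's route stays entirely inside the variational/E-L machinery, which is precisely what survives when no closed-form minimizer is available (the situation of Remark~\ref{r:spectralBdds} for general diffusions, where \eqref{eq:Bayes_with_Yn} has no analogue). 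Your remaining burden, which you correctly identify, is the $L^2$ control of the cubic remainders and of $\sum_n a_{n-1}\bigl((\Delta Z_n)^2-\Delta t_n\bigr)$; these are handled exactly as in the paper's Step~4, so the gap is one of bookkeeping, not of substance.
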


\begin{remark}
\upshape
\label{r:spectralBdds}
The considerations of this section highlight the variational
underpinnings of the nonlinear filter for the special case, $\ud X_t = 0$.  

For a general class of diffusions, the time-stepping procedure is
modified as follows: Set $\rho_0 = p_0^* \in {\cal P}$  and
inductively define $\{\rho_n\}_{n=1}^{N} \subset {\cal P}$ by taking
$\rho_n \in {\cal P}$ to minimize the functional \eqref{eqn:obj_fn},
\begin{equation*}
I_n(\rho)\doteq D(\rho \mid {\mathbb P} [{\rho}_{n-1}]) + \frac{\Delta t_n}{2} \int
\rho(x) (Y_n - h(x))^2 \ud x,
\end{equation*}
where ${\mathbb P} [\rho_{n-1}]$ is the ``push-forward'' from time $t_{n-1}$
to $t_n$, i.e., ${\mathbb P} [\rho_{n-1}]$ is the probability density
of $X_{t_n}$, given $\rho_{n-1}$ as the (initial) density of $X_{t_{n-1}}$.  For
the special case considered in this section, ${\mathbb P} [\rho_{n-1}]
= {\rho}_{n-1}$.  
\notes{I hate this.  Why can't we use standard notation,  ${\rho}_{n-1} P^{t_n-t_{n-1}}$, where $\{P^t\}$ is the semigroup for the diffusion?
In any case, we do need a subscript ${\mathbb P}_n$ since the increments of $t_n$ are not constant.}

The proof procedure is easily modified to derive the counterpart of the E-L
equation~\eqref{eqn:EL-density} and the nonlinear
filter~\eqref{eqn:KS} for a general class of diffusions.  
The hard part is to establish, in an a priori manner, the spectral
bound {\bf PI}($\bar{\lambda}$) in Prop.~\ref{prop:exist_uniq}.   Derivation of the spectal bound for the general case will
be a subject of future work.   Note that
the bound is needed to obtain a unique solution of the Poisson
equation.  

The following section shows that both the
variational analysis and the Poisson equation are also central to
construction of a particle filter algorithm in continuous time.     
\qed
\end{remark}

\section{Feedback Particle Filter}
\label{sec:FPF}

The objective of this section is to employ the time-stepping procedure
to construct a particle filter algorithm.  

\notes{ (archives) 
{\bf comprise} 
Usage: The use of of after comprise should be avoided: the library comprises (not comprises of) 6500,000 books and manuscripts. Consist, however, should be followed by of when used in this way: Her crew consisted of children from Devon and Cornwall.
\\
I believe that "comprised of" is o.k., and the web agrees: comprise is increasingly used in place of compose, especially in the passive: The Union is comprised of 50 states
}

A particle filter
is comprised of $N$ stochastic processes $\{X^i_t : 1\le i \le N\}$: The
value $X^i_t \in \R^d$ is the state for the $i^{\text{th}}$ particle
at time $t$. For each time $t$, the empirical distribution formed by
the ``particle population'' is used to approximate the posterior
distribution.  This is  defined for any measurable set $A\subset\R^d$
by
\begin{equation}
p^{(N)}(A,t) = \frac{1}{N}\sum_{i=1}^N \ind\{ X^i_t\in A\}. 
\label{e:piN}
\end{equation}
The model for the particle filter is assumed here to be a controlled system,
\begin{equation}
\ud X^i_t = \underbrace{u(X^i_t, t) \ud t + \v(X^i_t,t) \ud Z_t}_{\ud U_t^i},
 \label{eqn:particle_model}
\end{equation}  
where the functions $\v(x,t),\,u(x,t)$ are $\R^d$-valued.  It is assumed that  the initial conditions $\{X^i_0\}_{i=1}^N$  are
i.i.d., independent of $\{X_t,Z_t\}$, and drawn from the initial
distribution $p^*(x,0) \equiv p^*_0(x)$ of $X_0$.

We impose the following admissibility requirements on
the control input $U^i_t$ in~\eqref{eqn:particle_model}:
\begin{definition}[Admissible Input]
The control input $U^i_t$ is {\em admissible} if the following conditions are met:
(i)
The  random variables $u(x,t) $
and $\v(x,t)$ are  $\clZ_t = \sigma(Z_s:s\le t)$ measurable for each
$t$.   
(ii)
For each $i$ and $t$, $ \Expect[|u|] \doteq \Expect[ \sum_{l} | u_l(X^i_t,t) | ]<\infty$,
 and $\Expect[|\v|^2] \doteq \Expect [ \sum_j |\v_j(X^i_t,t) |^2 ]<\infty$.
\qed
\end{definition}

\medskip

There are two types of conditional distributions of interest in our analysis:
\begin{romannum}
\item $p(x,t)$:  Defines the conditional distribution of $X^i_t$
    given $\clZ_t$.
\item $p^*(x,t)$: Defines the conditional distribution of $X_t$
    given $\clZ_t$.
\end{romannum}

\medskip

The functions $\{ u(x,t),\v(x,t)\}$ are said to be \textit{optimal} if
$p\equiv p^*$.  That is, given $p^*(\cdot,0)= p(\cdot,0)$, our goal is
to choose $\{u,\v\}$ in the feedback particle filter so that the
evolution equations of these conditional distributions coincide.

\medskip

The optimal functions are obtained from the time-stepping procedure
introduced in~\Sec{sec:TS_prcdr}.  Recall that at step $n$ of the
procedure, the distribution $\rho_{n}$ is obtained upon minimizing
the functional~\eqref{eqn:obj_fn}, repeated below:
\begin{equation*}
I_n(\rho)\doteq D(\rho \mid \rho_{n-1}) + \frac{\Delta t_n}{2} \int
\rho(x) (Y_n - h(x))^2 \ud x.
\label{eqn:obj_fn_appdx0}
\end{equation*}
The optimizer has an explicit representation given in \eqref{eq:Bayes_with_Yn}.  
\notes{ spm archive,
too bad this equation gives rhon and not rhon+1}

The key is to construct a diffeomorphism $x\mapsto
s_n(x)$ such that $\rho = s_n^{\#} \left(\rho_{n-1} \right)$, where
$s_n^{\#}$ denotes the push-forward operator.  The push-forward of a probability density $\rho$ by a smooth map $s$ is defined through the change-of-variables formula
\begin{equation*}
	\int g(x) [s^{\#}(\rho)](x) \ud x = \int g(s(x)) \rho(x) \ud x,
\end{equation*}
for all continuous and bounded test functions $g$.

The particle filter
equations are obtained from the first-order
optimality conditions for $s_n$.  For this purpose, we look at the
cumulative objective function, defined for $N\ge 1$ by
\begin{equation}
J^{(N)}(\underline{s}) \doteq \sum_{n=1}^N \left(  I_n(s_n^\#(\rho_{n-1})) - \frac{\Delta t_n}{2} Y_n^2 \right),
\label{e:JN_main}
\end{equation}
where $\underline{s}\doteq (s_1,s_2,\hdots,s_N)$ denotes a sequence of
diffeomorphisms.  The objective is to construct a minimizer, denoted as
$\underline{\chi} \doteq (\chi_1,\chi_2,\hdots,\chi_N)$,
and consider the limit as
$N\rightarrow \infty$,  $\barDelta_N \rightarrow 0$.  Note the sequence
$\{ \rho_{n-1}(x) \}_{n=1}^N$ is assumed given here (see~\eqref{eq:Bayes_with_Yn}).
Its limit, which we denote as $\rho(x,t)$, see~\eqref{eqn:rho_limit_explicit}, is equal to $p^*(x,t)$, the posterior distribution of $X_t$
    given $\clZ_t$, by Theorem~\ref{thm:KS}.

\medskip

The calculations in Appendix~\ref{appdx:FPF} provide the following
characterization of the optimal functions $\{u,\v\}$:
\begin{romannum}
\item The function $\v$ is a solution to
\begin{equation}
\nabla \cdot (\rho \v) = - (h-\hat{h})
 \rho,
\label{e:bvp_divergence_multi}
\end{equation}
\item The function $u$ is obtained as
\begin{equation}
u(x,t) = -\frac{1}{2} \v(x,t) \bigl( h(x) + \hat{h}_t \bigr) + \w(x,t),
\label{eqn:u_intermsof_v*}
\end{equation}
where $\hat{h}_t \doteq \int h(x) \rho(x,t) \ud x$ and $\w =
\left(\w_1,\w_2,...,\w_d\right)$ is a $\R^d$-valued function with
\begin{equation*}
\w_l(x,t) := \frac{1}{2} \sum_{k=1}^d \v_{k}(x,t)
\frac{\partial \v_{l}}{\partial x_{k}}(x,t).
\label{eqn:wong_term_intro}
\end{equation*}
\end{romannum}
This in particular yields the following feedback particle filter
algorithm -- obtained upon substituting $\rho$ by $p$, the posterior
distribution of $X_t^i$ given
$\mathcal{Z}_t$:

\medskip

\noindent{\bf Feedback particle filter} (in Stratonovich
form) is given by
\begin{equation}
\begin{aligned}
\ud X^i_t 
&  =\v(X^i_t,t) \circ \ud \Inov^i_t, \end{aligned}
\label{eqn:particle_filter_nonlin_intro}
\end{equation}
where 
\begin{equation*}
\ud \Inov^i_t \doteq \ud Z_t - \frac{1}{2}    (h(X^i_t) + \hat{h}_t) \ud t\,, \qquad \hat{h}_t := \E[h(X_t^i)|\mathcal{Z}_t].
\label{e:in_intro}
\end{equation*}
The gain function is expressed as
\begin{equation*}
\v(x,t) = 
\nabla \phi(x,t),
\label{eqn:gradient_gain_fn_intro}
\end{equation*}
and it is obtained at each time $t$ as a solution of Poisson's equation:
\begin{equation*}
\begin{aligned}
\nabla \cdot (p(x,t) \nabla \phi (x,t) ) & = - (h(x)-\hat{h}) p(x,t),\\
\int \phi(x,t) p(x,t) \ud x & = 0,
\end{aligned}
\end{equation*}
where $p$ denotes the conditional distribution of $X_t^i$ given
$\mathcal{Z}_t$.
\qed

\medskip

This algorithm requires approximations in numerical
implementation since both the gain $\v$ and the conditional mean $\hat{h}$ depend upon the density $p$ to be estimated.
This is resolved by replacing $p$ by the empirical distribution \eqref{e:piN} to obtain $\hat{h}_t \approx
\frac{1}{N}\sum_{i=1}^N h(X_t^i) =: \hat{h}_t^{(N)}$.  Likewise, a
Galerkin algorithm is used to obtain a finite-dimensional
approximation of the gain function $\v$;
cf.,~\cite{yanlaumehmey12}.   

The following theorem shows that, in absence of these approximations,  the feedback particle filter is
exact.  Its proof appears in the Appendix~\ref{apdx:pf_Kushner}.

\begin{theorem}\label{thm:exact}
Under Assumptions~(A1)-(A2), the feedback particle filter~\eqref{eqn:particle_filter_nonlin_intro} is
exact.  That is, 
provided $p(\varble,0)=p^*(\varble ,0)$, we have for all $t\ge 0$,
\[
p(\varble,t) = p^*(\varble,t).
\]
\qed
\end{theorem}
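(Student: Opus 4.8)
The plan is to derive the stochastic evolution equation satisfied by $p(\varble,t)$, the conditional density of a single particle $X_t^i$ given $\mathcal{Z}_t$, and to show that with the optimal $\{u,\v\}$ this equation is exactly the weak Kushner--Stratonovich equation~\eqref{eqn:KS}; since $p^*$ solves~\eqref{eqn:KS} as well and the two share the initial datum $p_0^*$, exactness then follows from uniqueness. The key structural observation is that the particle dynamics~\eqref{eqn:particle_model} carry no noise source independent of the observation: conditioned on $\mathcal{Z}_t$ the increments $\ud Z_t$ are frozen, so solving~\eqref{eqn:particle_model} from $X_0^i=x_0$ defines a $\mathcal{Z}_t$-measurable flow $x_0\mapsto X_t^i=:\Psi_t(x_0)$, and the only residual randomness is the initial condition $X_0^i\sim p_0^*$, which is independent of $\mathcal{Z}_t$. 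Hence $p(\varble,t)=\Psi_{t\,\#}p_0^*$ is the push-forward of the prior, and for test functions $f\in C_c(\R^d)$ one has $\langle f,p_t\rangle=\E[f(X_t^i)\mid\mathcal{Z}_t]=\int f(\Psi_t(x_0))\,p_0^*(x_0)\,\ud x_0$.

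First I would fix $f\in C_c^\infty(\R^d)$ and apply It\^o's formula to $f(X_t^i)$ in the It\^o form $\ud X_t^i=u\,\ud t+\v\,\ud Z_t$, with $u$ given by~\eqref{eqn:u_intermsof_v*}; the quadratic-variation term is $\tfrac12\sum_{k,l}\v_k\v_l\,\partial^2_{kl}f\,\ud t$ since $\langle Z\rangle_t=t$. Integrating against $p_0^*(x_0)\,\ud x_0$ over initial conditions and invoking a stochastic Fubini theorem (justified by the admissibility bounds $\E|u|<\infty$, $\E|\v|^2<\infty$ together with the regularity of $\v=\nabla\phi$), I would obtain
\[
\ud\langle f,p_t\rangle=\Big\langle \nabla f\cdot u+\tfrac12{\textstyle\sum_{k,l}}\v_k\v_l\,\partial^2_{kl}f,\ p_t\Big\rangle\,\ud t+\langle\nabla f\cdot\v,\ p_t\rangle\,\ud Z_t,
\]
the common factor $\ud Z_t$ pulling out of the integral because it is shared by all particles.

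The $\ud Z_t$ coefficient matches~\eqref{eqn:KS} immediately: integrating by parts and using Poisson's equation~\eqref{e:bvp_divergence_multi} gives $\langle\nabla f\cdot\v,p_t\rangle=-\int f\,\nabla\cdot(p\v)\,\ud x=\langle(h-\hat h)f,\ p_t\rangle$. For the drift I would substitute $u=-\tfrac12\v(h+\hat h)+\w$ and combine $\langle\nabla f\cdot\w,p_t\rangle$ with the second-order term; using $\w_l=\tfrac12\sum_k\v_k\partial_k\v_l$ these collapse to $\tfrac12\langle(\v\cdot\nabla)(\v\cdot\nabla f),\ p_t\rangle$, and one integration by parts against~\eqref{e:bvp_divergence_multi} turns this into $\tfrac12\langle(h-\hat h)\,\v\cdot\nabla f,\ p_t\rangle$. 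Adding the $-\tfrac12(h+\hat h)\nabla f\cdot\v$ contribution, the drift becomes $\tfrac12\langle[-(h+\hat h)+(h-\hat h)]\,\v\cdot\nabla f,\ p_t\rangle=-\hat h\,\langle\v\cdot\nabla f,\ p_t\rangle=-\hat h\,\langle(h-\hat h)f,\ p_t\rangle$, so that $\ud\langle f,p_t\rangle=\langle(h-\hat h)(\ud Z_t-\hat h\,\ud t)f,\ p_t\rangle$, which is precisely~\eqref{eqn:KS}. The Wong--Zakai term $\w$ is exactly the correction needed for this second-order cancellation, i.e.\ to reconcile the It\^o drift with the Stratonovich transport.

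Since $p$ and $p^*$ both satisfy the weak K-S equation~\eqref{eqn:KS} with the same initial data $p(\varble,0)=p^*(\varble,0)=p_0^*$, and the weak solution is unique --- given explicitly by~\eqref{eqn:rho_limit_explicit} in Theorem~\ref{thm:KS} --- we conclude $p(\varble,t)=p^*(\varble,t)$ for all $t$. The main obstacle is not the algebra but its rigorous justification: establishing the push-forward/conditioning identity and well-posedness of the flow $\Psi_t$ (requiring $\v=\nabla\phi\in C^1$ with the a priori bounds of Theorem~\ref{thm:thm1}(ii) and Proposition~\ref{prop:exist_uniq}), validating the stochastic Fubini interchange under the admissibility conditions, and ensuring that the boundary terms in the two integrations by parts vanish (controlled by the spectral-gap bound {\bf PI}($\bar\lambda$) and the $L^2(\R^d;\rho)$-integrability of $\v$ and $h-\hat h$). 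The appeal to uniqueness of the K-S solution, available through the explicit formula~\eqref{eqn:rho_limit_explicit}, then closes the argument.
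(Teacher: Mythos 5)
Your proposal is correct and is essentially the paper's own argument: the paper writes the forward (Fokker--Planck) equation for $p$ in strong form and uses the Poisson equation $\nabla\cdot(p\v)=-(h-\hat{h})p$ together with the identity $\tfrac{1}{2}\v_l\,\nabla\cdot(p\v)+\w_l p=\tfrac{1}{2}\sum_{k}\frac{\partial}{\partial x_k}\bigl(p\,\v_l\v_k\bigr)$ to collapse it to the Kushner--Stratonovich equation, which is exactly the cancellation your weak-form computation performs via integration by parts --- a variant the paper itself describes as ``entirely analogous.'' The only substantive differences are bookkeeping: you make the concluding uniqueness appeal explicit (the paper stops once $p$ and $p^*$ are shown to satisfy identical stochastic differential equations with the same initial condition), whereas the paper explicitly verifies admissibility of the optimal $\{u,\v\}$ using the a priori bounds of Theorem~\ref{thm:thm1} and Proposition~\ref{prop:exist_uniq}, a step you invoke for the stochastic Fubini interchange but do not carry out.
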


\begin{remark}
\upshape
The extension of the feedback particle filter to the general nonlinear
filtering problem is straightforward.  In particular, consider the
filtering problem
\begin{align*}
\ud X_t &= a(X_t)\ud t + \ud B_t,
\\
\ud Z_t &= h(X_t)\ud t + \ud W_t,
\end{align*}
where $X_t\in\R^d$ is the state at time $t$, $Z_t \in\R$ is the
observation, $a(\varble)$, $h(\varble)$ are $C^1$
functions, and $\{B_t\}$, $\{W_t\}$ are mutually independent
standard Wiener processes.

For the solution to this problem, the feedback particle filter is
given by
\begin{equation*}
\ud X^i_t   =a(X_t^i)\ud t + \ud B_t^i + \v(X^i_t,t) \circ \ud \Inov^i_t, 
\label{eqn:particle_filter_nonlin1}
\end{equation*}
where the formulae for $\v$ and $\Inov^i$ are as before.  The
extension of the Theorem~\ref{thm:exact} to this more general case
requires a well-posedness analysis of the solution of Poisson's
equation.  The key is to obtain a priori spectral bounds (see also
Remark~\ref{r:spectralBdds}) which will be a
subject of future publication.
\qed
\end{remark}

\section{Appendix}
\label{sec:appdx}

The convergence proofs here require bounds in the almost-sure and $L^2$ senses.

Recall that we consider a finite time interval $[0,T]$,  and for each $N$ we consider a
discrete-time sequence $\{0, t_1, t_2,\hdots,t_N\}$ with $0 \le t_1 \le \ldots \le t_N = T$, and denote
$\Delta t_n \doteq t_n-t_{n-1}$.  We let $\barDelta_N= \max_n \Delta
t_n$, which is assumed to vanish as $N\to\infty$.

We use $C>0$ to denote a constant that may depend on $N$ and on the process path $\{Z_t\}$, but is uniformly bounded in $L^2$. Recall that the densities $\rho_0,\ldots,\rho_N$ are random objects that depend on the samples $Z_0,\ldots,Z_N$. In particular,  the observation process has continuous sample paths, so there exists such a $C$ for which  
$|Z_t| \le C$ for all $t\in[0,T]$.   

\subsection{Proof of Prop.~\ref{prop:exist_uniq}}
\label{proof:prop:exist_uniq}

\noindent (i) Using~\eqref{eq:Bayes_with_Yn}, $\rho_n(x) = c_n \exp \left( -\sum_{k=1}^n\phi_k(x) \right)\rho_0(x)$, where $c_n$ is
a normalizing constant and $\phi_k(x)=\frac{\Delta t_k}{2} (Y_k - h(x))^2$.  Therefore,
\begin{align*}
\smopot_n(x) & \doteq - \ln\, \rho_n(x)
= \smopot_{0} (x) + \sum_{k=1}^n \frac{\Delta
  t_k}{2} (Y_k - h(x))^2 - \ln(c_n).
\end{align*} 
Differentiating,
\begin{align*}
\nabla \smopot_n(x) & = \nabla \smopot_0(x) -  \sum_{k=1}^n \Delta t_k
(Y_k-h(x))\nabla h(x) \\
& = \nabla \smopot_0(x) -  (Z_{t_n}-Z_{t_0}) \nabla h(x) + t_n  h(x) \nabla
h(x),
\end{align*}
and similarly,
\[
\frac{\partial ^2 \smopot_n }{\partial x_i \partial x_j}
=
\frac{\partial ^2 \smopot_0}{\partial x_i \partial x_j} 
-
(Z_{t_n}-Z_{t_0}) \frac{\partial ^2 h}{\partial x_i \partial x_j} 
+ 
t_n
\left(\frac{\partial h}{\partial x_i}\frac{\partial h}{\partial x_j} +
  h \frac{\partial ^2 h}{\partial x_i \partial x_j} \right).
\]

From the assumption~(A2) on $h$, it follows that, if $\smopot_0$
satisfies the properties listed in assumption~(A1), then so does
$\smopot_n$.  This is because the sample paths of $Z_t$ are
a.s.\ continuous and thus bounded on $[0,T]$.  
\medskip

\noindent (ii) Using~\eqref{eq:Bayes_with_Yn},
\begin{equation}
\rho_n(x) = \rho_{n-1} (x)\;  \frac{\exp(- \frac{\Delta t_n}{2} Y_n^2 ) \; \exp\left( h(x) \Delta Z_n - \frac{\Delta t_n}{2}
|h(x)|^2 \right) }{\exp(- \frac{\Delta t_n}{2} Y_n^2 )
\; \int \rho_{n-1}
  (y) \exp\left( h(y) \Delta Z_n - \frac{\Delta t_n}{2}
|h(y)|^2 \right)  \ud y}.
\label{eq:Rick-made-me-write-this}
\end{equation}
On canceling the common term $\exp(- \frac{\Delta t_n}{2} Y_n^2 )$ from both
the numerator and denominator, we can write $\rho_n(x) = \rho_n(x)\exp\big(H_n(x)\big)/\int \rho_{n-1}(y)\exp\big(H_n(y)\big)\ud y$, where we have defined $H_n(x) =  h(x) \Delta Z_n - \frac{\Delta t_n}{2}
|h(x)|^2$. Since $\rho_{n-1}$ is a probability density, we have
\begin{align*}
	\left\| \frac{\rho_n}{\rho_{n-1}} \right\|_\infty \le \exp\big(\text{osc}(H_n)\big), \qquad \text{osc}(H_n) \doteq \sup H_n - \inf H_n.
\end{align*}
Because $\frac{\Delta t_n}{2} |h(x)|^2 \ge 0$, $\sup H_n \le |\Delta Z_n| \| h \|_\infty$, whereas $\inf H_n \ge - |\Delta Z_n| \| h \|_\infty - \frac{\Delta t_n}{2} \| h \|^2_\infty$. Combining these estimates, we get the bound
\begin{equation}
\left\| \frac{\rho_n}{\rho_{n-1}}\right\|_\infty \le  \exp \left( 2 |\Delta Z_n| \|h\|_{\infty} +  \frac{\Delta t_n}{2}  \|h\|_{\infty}^2 \right).
\label{eq:ratio_ineq}
\end{equation}
It follows that
\begin{align*}
\|f\|^2_{L^2(\R^d;\rho_n)} 
& = \int \rho_n(x) |f(x)|^2 \ud x \\
& \le \exp \left( 2 |\Delta Z_n| \|h\|_{\infty} +  \frac{\Delta t_n}{2}
  \|h\|_{\infty}^2 \right) 
\int \rho_{n-1} (x) |f(x)|^2 \ud x.
\end{align*}
The second equation provides the bound~\eqref{eq:nm1_2_n_bound} in
part~(ii) of the proposition with $C= \exp( \frac{\barDelta_N}{2} \|h\|_{\infty}^2)$ and $\alpha = 2 \|h\|_{\infty}$.
\notes{division by 2 removed here as well --spm}

Based on this and the definition \eqref{e:clP}, we see that the minimizer $\rho_n\in{\cal P}$ if
$\rho_{n-1}\in{\cal P}$ (take $f(x) = x$ to establish a bounded second moment).  By induction,
$\rho_n\in{\cal P}$ if $\rho_{0}\in{\cal P}$. 
\medskip

\noindent (iii) Denoting the quantity on the right-hand side of \eqref{eq:ratio_ineq} by ${\cal E}$, we conclude that the ratio $\rho_n/\rho_{n-1} \in L^2(\R^d; \rho_{n-1})$, 
with
\[
\int \left( \frac{\rho_n}{\rho_{n-1}} \right)^2 \rho_{n-1} \ud x  = \int
\left( \frac{\rho_n}{\rho_{n-1}} \right) \rho_{n} \ud x \le  {\cal E}.
\]

By a direct calculation, 
\[
\nabla \left( \frac{\rho_n}{\rho_{n-1}} \right)  = ( -\nabla \smopot_n +
\nabla \smopot_{n-1}) \frac{\rho_n}{\rho_{n-1}}  = (\Delta Z_n - \Delta t_n h )\nabla h\;\frac{\rho_n}{\rho_{n-1}}.
\]
The gradient is in $L^2(\R^d\to\R^d;\rho_{n-1})$ because,
\notes{$L^2(\R^d\to\Re^d;\rho_{n-1})$ ok?  -spm}
\begin{align*}
\int  \left| \nabla \left( \frac{\rho_n}{\rho_{n-1}} \right) \right|^2 \rho_{n-1} \ud x 
& = \int (\Delta Z_n - \Delta t_n h )^2 |\nabla h|^2  \frac{\rho_n}{\rho_{n-1}} \rho_{n} \ud x 
 \\
& \le 2 (|\Delta Z_n|^2 + |\Delta t_n|^2 \|h\|_{\infty}^2) \| \nabla h\|_{\infty}^2 \; {\cal E}.
\end{align*}

\medskip

\noindent (iv) We claim that $\rho_n(x) = e^{-v_n(x)}\rho_0(x)$ where $v_n(x)$
is uniformly bounded. Then 
$\rho_n$ satisfies {\bf PI}($\lambda_n$) with
\begin{equation}
\lambda_n = \exp(-\text{osc}(v_n)) \lambda_0. 
\label{eqn:Holley}
\end{equation}
This is because, for any $f$ satisfying  $\int \rho_n f \ud x = 0$,
\notes{condition needed --- noted by reviewer --spm}
\begin{align*}
\int &|f|^2 \rho_n(x) \ud x  = \int |f|^2 e^{-v_n(x)} \rho_0(x) \ud x \le
 e^{- \inf v_n}  \int |f|^2 \rho_0(x) \ud x 
\\
 & \le e^{- \inf v_n} \frac{1}{\lambda_0}  \int |\nabla f|^2 \rho_0(x)
 \ud x 
\le e^{\sup v_n- \inf   v_n}  \frac{1}{\lambda_0} \int |\nabla f|^2 \rho_n(x) \ud x.
\end{align*}
A uniform bound on $v_n$ yields a uniform bound on $\lambda_n$. 

We now prove the claim that $v_n$ is uniformly bounded. 
Using~\eqref{eq:Rick-made-me-write-this} iteratively, 
we can write $\rho_n(x) = e^{-v_n(x)}\rho_0(x)$ with 
\[
v_n(x) = -\psi_n(x) + \ln\Bigl(\int \rho_0 \exp(\psi_n ) \ud x\Bigr),
\]
where $\psi_n(x) = (Z_{t_n}-Z_{t_0}) h(x) - \frac{t_n}{2} |h(x)|^2$.
It then follows that
\[
\text{osc}(v_n) \le 2 \| \psi_n\|_{\infty} \le C \|h\|_{\infty} + T \|h\|_{\infty}^2,
\] 
where $C$ depends upon the sample path $Z_t$ for $t\in[0,T]$ but is
independent of $N$.

Using~\eqref{eqn:Holley}, $\rho_n$ satisfies {\bf
  PI}($\bar{\lambda}$) with $\bar{\lambda} = \lambda_0 \exp \left( - (C
  \|h\|_{\infty} + T \|h\|_{\infty}^2) \right)$.

\subsection{Convergence of $\{\rho^{(N)}\}$}
\label{Appdx:Convergence}
Now we explain in what sense $\rho^{(N)}$ converges to $\rho$ as $N \to \infty$. Recalling formula \eqref{eqn:rho_limit_explicit}, we have $\rho(x,t) = e^{-v(x,t)}\rho_0(x)$ with 
\[
v(x,t) = -\psi(x,t) + \ln\Bigl(\int \rho_0 \exp(\psi ) \ud x\Bigr),
\]
where $\psi(x,t) = (Z_{t}-Z_{0}) h(x) - \frac{t}{2} |h(x)|^2$.
Define $v^{(N)}(x,t) = v_n(x)$ whenever $t \in [t_{n-1},t_n)$. Assuming the maximum step size $\barDelta_N \rightarrow 0$ as $N \rightarrow \infty$ , we deduce that 
\[
v^{(N)} - v \rightarrow 0
\]
uniformly with respect to $x \in \R^d, t \in [0,T]$, due to the boundedness of $h$ and (uniform) continuity of the sample path $t \mapsto Z_t$. Hence 
\[
\frac{\rho^{(N)}}{\rho} = \exp(v-v^{(N)}) \rightarrow 1
\]
uniformly with respect to $x$ and $t$. In particular, $\rho^{(N)} \rightarrow \rho$ pointwise. 

\subsection{Proof of Theorem~\ref{thm:thm1}}
\label{Appdx:Poisson}

A density $\rho$ is assumed to satisfy {\bf PI}($\lambda$): That
is, for all functions $\phi\in H_0^1(\R^d;\rho)$,
\begin{equation}
\int |\phi(x)|^2 \rho(x) \ud x \le \frac{1}{\lambda} \int
|\nabla\phi(x)|^2 \rho(x) \ud x.
\label{eqn:Poincare}
\end{equation}

Consider  the inner product
\[
\langle\phi,\psi\rangle \doteq \int \nabla \phi(x) \cdot \nabla
\psi(x) \; \rho(x) \ud x.
\]
On account of~\eqref{eqn:Poincare}, the norm defined by using the
inner product $\langle \cdot,\cdot\rangle$ is equivalent to the standard norm in
$H^1_0(\R^d;\rho)$.
\notes{  the reviewer wants "dominated" instead of equivalent.  This may go back to a comment I made last year:  "I added the zero since this is not a norm on $H^1$".   Thoughts?  -spm}
\medskip

\noindent (i) Consider the BVP in its weak form~\eqref{eqn:EL_phi_weak}.  The
  integral on the right hand side is a bounded linear functional on
  $H_0^1$, since
\begin{align*}
\Big|\int  (g(x)-\hat{g}) \psi(x) \rho(x) \ud x\Big|^2 
& \le 
\Big( \int
|g(x)-\hat{g}|^2 \rho (x) \ud x \Bigr)
\Bigl( \int  |\psi(x)|^2 \rho (x) \ud x \Bigr)
\\
& \le
k_g \int  |\nabla \psi(x)|^2 \rho (x) \ud x,
\end{align*}
where~\eqref{eqn:Poincare} is used to obtain the second inequality,  with $k_g = \lambda^{-1} \int
|g(x)-\hat{g}|^2 \rho (x) \ud x $.

It follows from the Hilbert-space form of the Riesz representation theorem that there exists a
unique $\phi \in H_0^1$ such that
\[
\langle\phi,\psi\rangle  =  \int  (g(x)-\hat{g}) \psi(x) \rho (x) \ud
x
\]
holds for all $\psi \in H_0^1(\R^d;\rho)$.  It trivially also holds for all 
constant functions ($\psi\equiv \text{const.}$).  Hence, it holds for
all $\psi \in H^1(\R^d;\rho)$ and $\phi$ is a weak solution
of the BVP, satisfying \eqref{eqn:EL_phi_weak}.

The estimate~\eqref{eqn:bound1} follows by substituting $\psi=\phi$ in~\eqref{eqn:EL_phi_weak} and using
Cauchy-Schwarz.

\medskip

\noindent (ii) For the estimate~\eqref{eqn:bound2}, we first establish the
  following bound:
\begin{equation} \label{eqn:h2bound}
   \int |D^2 \phi|^2 \rho \ud x \leq  \int \nabla \phi \cdot G \rho \ud x,
\end{equation}
where the vector function $G \in L^2(\R^d\to\R^d;\rho)$ is defined by
\[
G = D^2(\log \rho) \nabla \phi + \nabla g
\]
and where $|D^2 \phi|^2= \sum_{j,k} (\frac{\partial^2 \phi}{\partial x_j \partial x_k})^2$.

Since each entry of the Hessian matrix $D^2(\log \rho)$ is bounded and
$\nabla g \in L^2(\R^d\to\R^d;\rho)$, we have $G \in L^2(\R^d\to\R^d;\rho)$.
The elliptic regularity theory \cite[Section 6.3]{EVANS}
applied to the weak solution $\phi \in H^1(\R^d;\rho)$ says that $\phi
\in H^3_{\rm loc}(\R^d)$.
Hence the partial differential equation holds pointwise:   
\begin{equation} 
- \nabla \cdot (\rho \nabla \phi) = (g - \hat{g}) \rho .
\label{eqn:pointwise}
\end{equation}
Differentiating with respect to $x_k$ gives
\begin{align*}
 - \nabla \cdot \left(\rho \nabla \frac{\partial \phi}{\partial x_k}\right) - \nabla \left(\frac{\partial \log p}{\partial x_k}\right) \cdot (\rho\nabla \phi) - \frac{\partial \log \rho}{\partial x_k} \nabla \cdot (\rho \nabla \phi) = \frac{\partial g}{\partial x_k} \rho + (g-\hat{g}) \frac{\partial \log \rho}{\partial x_k} \rho .
\end{align*}
The final terms on the left and right sides cancel, by equation \eqref{eqn:pointwise}. Thus the preceding formula becomes
\begin{equation} \label{eqn:pointwisederiv}
- \nabla \cdot \left(\rho \nabla \frac{\partial \phi}{\partial x_k}\right) = G_k \rho ,
\end{equation}

Let $\beta(x) \geq 0$ be a smooth, compactly supported ``bump'' function, meaning $\beta(x)$ is radially decreasing with $\beta(0)=1$. Let $s>0$, and multiply \eqref{eqn:pointwisederiv} by $\beta(sx)^2 \frac{\partial \phi}{\partial x_k}$. Integrate by parts on the left side (noting the boundary terms vanish because $\beta$ has compact support) to obtain 
\begin{equation}
\int \nabla \left[\beta(sx)^2 \frac{\partial \phi}{\partial x_k}\right] \cdot \left(\nabla \frac{\partial \phi}{\partial x_k}\right) \rho \ud x = \int \beta(sx)^2 \frac{\partial \phi}{\partial x_k}   G_k \,  \rho    \ud x .
\label{e:LHSRHS}
\end{equation}

 The left side of \eqref{e:LHSRHS} can be expressed as
\begin{align*}
 \int \beta(sx)^2 \left|\nabla \frac{\partial \phi}{\partial x_k}\right|^2 \rho
\ud x 
+ 2s \int \frac{\partial \phi}{\partial x_k} \beta(sx) (\nabla \beta)(sx) \cdot \left(\nabla \frac{\partial \phi}{\partial x_k}\right) \rho \ud x .
\end{align*}
The second term is bounded by
\begin{align*}
& 2s \lVert \nabla \beta \rVert_{L^\infty(\R^d)} \int \left|\frac{\partial
  \phi}{\partial x_k}\right| \beta(sx) \left|\nabla \frac{\partial \phi}{\partial
  x_k}\right| \rho \ud x \\
&\qquad \leq s \lVert \nabla \beta \rVert_{\infty} \int \left[\left(\frac{\partial \phi}{\partial x_k}\right)^2 + \beta(sx)^2 \left|\nabla \frac{\partial \phi}{\partial x_k}\right|^2 \right] \rho \ud x
\end{align*}
and so the left side of \eqref{e:LHSRHS} is bounded from below by
\begin{align*}
& (1-s \lVert \nabla \beta \rVert_{L^\infty(\R^d)}) \int \beta(sx)^2
\left|\nabla \frac{\partial \phi}{\partial x_k}\right|^2 \rho \ud x 
-  s \lVert \nabla \beta \rVert_{\infty} \int \left(\frac{\partial
    \phi}{\partial x_k}\right)^2 \ud x .  
\end{align*}
The right hand side of \eqref{e:LHSRHS} tends to $ \int \frac{\partial \phi}{\partial x_k} G_k \rho \ud x$, as $s\to 0$, 
 by dominated convergence, and  since $\beta(x)$ is radially decreasing, with $\beta(0)=1$. 

Letting $s \to 0$ in  \eqref{e:LHSRHS},    we conclude from the monotone convergence theorem that
\[
\int \left|\nabla \frac{\partial \phi}{\partial x_k}\right|^2 \rho \ud x \leq \int \frac{\partial \phi}{\partial x_k} G_k \rho \ud x .
\]
Summing over $k$ establishes the bound~\eqref{eqn:h2bound}.  

Next we prove \eqref{eqn:bound2}. First, 
\begin{equation} \label{eqn:interim}
\int |\nabla \phi|^2 \rho \ud x \leq \lambda^{-1} \int |g-\hat{g}|^2
\rho \ud x \leq \lambda^{-2} \int |\nabla g|^2 \rho \ud x
\end{equation}
by \eqref{eqn:bound1} followed by \eqref{eqn:Poincare} applied to the function $g-\hat{g} \in H^1_0(\R^d;\rho)$. Second, by the definition of $G$, the $L^2$-triangle inequality, and \eqref{eqn:interim}, we show that
\begin{align}
\big( \int |G|^2 \rho \ud x\big)^{\! 1/2} 
& \leq \lVert D^2(\log \rho) \rVert_{\infty} \big(\int |\nabla \phi|^2 \rho \ud x\big)^{1/2} + \big(\int |\nabla g|^2 \rho \ud x\big)^{\! 1/2} \notag \\
& \leq \Big( \frac{\lVert D^2(\log \rho) \rVert_{\infty}}{\lambda} + 1 \Big) \big(\int |\nabla g|^2 \rho \ud x\big)^{\! 1/2} . \label{eqn:interim2}
\end{align}
Now we take \eqref{eqn:h2bound} and apply Cauchy--Schwarz, followed by \eqref{eqn:interim} and \eqref{eqn:interim2}, to find:
\begin{align*}
\int |D^2 \phi |^2 \rho \ud x
& \leq \big( \int |\nabla \phi|^2 \rho \ud x \big)^{\! 1/2} \ \big( \int |G|^2 \rho \ud x \big)^{\! 1/2} \\
& \leq \big( \lambda^{-2} \int |\nabla g|^2 \rho \ud x \big)^{\! 1/2} \left( \frac{\lVert D^2(\log \rho) \rVert_{\infty}}{\lambda} + 1 \right) \big(\int |\nabla g|^2 \rho \ud x\big)^{\! 1/2} \\
& = \lambda^{-2} \big( \lambda + \lVert D^2(\log \rho) \rVert_\infty \big) \int |\nabla g|^2 \rho \ud x ,
\end{align*}
which proves \eqref{eqn:bound2}.

\subsection{Proof of Lemma~\ref{lem:EL}}
\label{proof:lemma:EL}
We compute the first variation of the functional \eqref{eqn:obj_fn}, which we reproduce here for reference:
\begin{equation}
I_n(\rho) := 
\int \rho(x)
\ln \rho(x) \ud x - \int \rho(x)
\ln \rho_{n-1}(x) \ud x 
+ \int \rho(x) \frac{(\Delta Z_{n}-h(x)\Delta t_n)^2}{2\Delta t_n} \ud x. 
\label{eqn:I_e_rho}
\end{equation}

Following the methodology of~\cite{Jordan99thevariational}, a vector field $\varsigma$ is used
to generate the first variation;  we initially assume that $\varsigma\in C^1_c$.    
Let $\Phi_{\tau}(x)$ be the solution of 
\[
\frac{\ud }{\ud \tau} \Phi = \varsigma(\Phi),\quad\quad
\Phi_0(x)  = x.
\]
For small $\tau$, define $\rho_{\tau} = \Phi_{\tau}^{\#} \rho_{n}$ to
be the push-forward of the minimizer $\rho_n$.
We have 
\[
\det(\nabla \Phi_{\tau}(x)) \, \rho_{\tau}(\Phi_{\tau}(x)) = \rho_{n}(x)\,,
\]
and $i(\tau)=I_{n}(\rho_{\tau})$ has a minimum at $\tau=0$. \notes{spm:  Someone else please address the reviewers concern}

The three terms in the E-L equation~\eqref{eqn:EL-density} are obtained
by explicitly evaluating the derivative $\frac{\ud}{\ud \tau}
i(\tau)$, at $\tau=0$, of the three terms in~\eqref{eqn:I_e_rho}:

\medskip

\noindent (i) The first term is the negative entropy 
\begin{align*}
\int \rho_{\tau} (z)
\ln \rho_{\tau} (z) \ud z & =  \int \rho_{n}(x) \ln \left[ \rho_{\tau}
  (\Phi_{\tau}(x)) \right] \ud x \\ 
& = \int  \rho_{n}(x) \ln \left[
  \rho_{n} (x) \, (\det (\nabla \Phi_{\tau}(x)))^{-1} \right] \ud x. 
\end{align*}
Therefore,
\begin{align*}
\left. \frac{\ud}{\ud \tau} \int \rho_{\tau} (z)
\ln \rho_{\tau} (z) \ud z \right|_{\tau=0} & =  -\left. \int \rho_{n}(x)
\frac{\ud}{\ud \tau} \, \ln [ \det (\nabla \Phi_{\tau}(x))
] \right|_{\tau=0} \ud x  \\
& = - \int \rho_{n}(x) \nabla \cdot \varsigma(x) \ud x = - \int
\rho_{n}(x) \nabla \smopot_n(x) \cdot \varsigma(x) \ud x,
\end{align*}
where the final equality is obtained by using integration by parts.  
The interchange of the order of the
differentiation and the integration is justified because the
difference quotient
\[
\frac{1}{\tau} \left( \ln [ \det (\nabla \Phi_{\tau}(x))
] - \ln [ \det (\nabla \Phi_{0}(x))
] \right)
\]
converges uniformly to $\left. \frac{\ud}{\ud \tau} \det (\nabla
  \Phi_{\tau}(x)) \right|_{\tau=0} =  \nabla \cdot \varsigma(x)$. This is
because $\varsigma$ is assumed to have a compact support and $\Phi_{\tau}(x) =  \Phi_{0}(x)
= x$ outside this compact set. 

\medskip

\noindent (ii) The second term is given by
\[
\int \rho_{\tau} (z)
\smopot_{n-1}(z) \ud z = \int \rho_{n} (x)
\smopot_{n-1} (\Phi_{\tau}(x)) \ud x,  
\]
and
\begin{align*}
\left. \frac{\ud}{\ud \tau} \int \rho_{n} (x)
\smopot_{n-1} \, (\Phi_{\tau}(x)) \ud x \right|_{\tau=0} & =  \int
\rho_{n}(x)  \left. \frac{\ud}{\ud \tau} \, \smopot_{n-1}
  (\Phi_{\tau}(x)) \right|_{\tau=0}  
\ud x \\
& = \int \rho_{n}(x) \, \nabla  \smopot_{n-1}(x) \cdot \varsigma(x)
\ud x,
\end{align*}
which is justified again because $\varsigma$ has compact support. 

\medskip

\noindent (iii) For the third term, similarly,
\begin{align*}
\left. \frac{\ud}{\ud \tau} [ \cdots ] \right|_{\tau=0}  &= \int \rho_n(x)
\left. \frac{\ud}{\ud \tau} \, \frac{(\Delta Z_{n}-h( \Phi_{\tau}(x)
    )\Delta t_n)^2}{2\Delta t_n} \right|_{\tau=0} \ud x\\
& = - \int \rho_{n} \left[ \Delta Z_{n} -
h(x) \Delta t_n \right] \nabla h(x) \cdot \varsigma(x) \ud x.
\end{align*}

Extension of the E-L equation to an arbitrary vector field in
$L^2(\R^d\to\R^d;\rho_{n-1})$ requires a standard approximation argument.  Suppose
$\varsigma\in L^2(\R^d \to \R^d;\rho_{n-1})$.  Using \Prop{prop:exist_uniq}~(ii),
$\varsigma\in L^2(\R^d \to \R^d;\rho_{n})$.  It then suffices to
approximate $\varsigma$ by a sequence of smooth, compactly supported vector
fields, noting that $|\nabla \smopot_{k}|(x) =O(|x|)$ as $x\rightarrow
\infty$, and that $h,\nabla h$
are bounded by assumption~(A2).  Recall here that ${\cal P}$ is the
space of probability densities with finite second moment. 
\notes{Reviewer asked if definition of $\cal P$ was used.  Comments?  -spm}

\medskip

\begin{remark}
Although the proof given here stresses the
variational aspect, the Euler-Lagrange equation can be obtained
directly from manipulating the solution~\eqref{eq:Bayes_with_Yn}: Taking the logarithm of~\eqref{eq:Bayes_with_Yn} gives
\[
- \smopot_{n} + \smopot_{n-1} + \phi_n = \text{const.}
\]
and applying the gradient operator yields:
\begin{equation*}
- \nabla \smopot_{n} + \nabla \smopot_{n-1} 
- (\Delta Z_{n} - h \Delta t_n) \nabla h = 0.
\end{equation*}
Multiplying by $\rho_n \varsigma$ and integrating gives~\eqref{eqn:EL-density}. 
\qed
\end{remark}
  
\subsection{Derivation of~\eqref{eqn:el_appdx_pf1}}
\label{appdx:derivationof-eqn:el_appdx_pf1}

Suppose $\varsigma$ is a weak solution of~\eqref{eq:varsigmadefn}.
Then for any test function $\psi\in H^1(\R^d;\rho_{n-1})$,
\begin{equation}
\int \nabla \psi(x) \cdot \varsigma (x) \; \rho_{n-1}(x) \ud x = \int
g(x) \psi(x) \; \rho_{n-1}(x) \ud x - \int
g \rho_{n-1} \ud x \int
\psi \rho_{n-1} \ud x .
\label{eq:weakformvarsigma}
\end{equation}

Take $\psi(x) = \frac{\rho_n(x)}{\rho_{n-1}(x)}$ -- the ratio is known to
be an element of $H^1(\R^d;\rho_{n-1})$ by
\Prop{prop:exist_uniq}~(iii).  The gradient of the ratio
is obtained as
\[
\nabla \left( \frac{\rho_n}{\rho_{n-1}} \right)  = ( -\nabla \smopot_n +
\nabla \smopot_{n-1}) \frac{\rho_n}{\rho_{n-1}}.
\]      
Substituting this in~\eqref{eq:weakformvarsigma}, 
\begin{equation}
\int  ( -\nabla \smopot_n +
\nabla \smopot_{n-1}) \cdot \varsigma (x) \rho_{n}(x) \ud x = \int
g(x) \rho_{n}(x) \ud x - \int
g(x) \rho_{n-1}(x) \ud x.
\label{eq:weakformvarsigma1}
\end{equation}

Combining~\eqref{eq:weakformvarsigma1} with~\eqref{eqn:EL-density}
gives the equation~\eqref{eqn:el_appdx_pf1}.

\subsection{Proof of Theorem~\ref{thm:KS}}
\label{proof:thm:KS}
We are given a test function $f\in C_c$.  So, $f\in
L^2(\R^d;\rho_n)$ for all $n\in\{1,2,\hdots,N\}$.  
Furthermore, there exists a uniform bound,
\begin{equation}
\|f\|_{L^2(\R^d;\rho_n)} < \|f\|_{L^\infty}  < C \quad\forall\;n.
\label{eq:bound_on_f}
\end{equation}
Denote $\hat{f}_n \doteq \int \rho_n(x) f(x)\ud x$.

\smallskip

Let $\xi_n \in L^2(\R^d\to\R^d;\rho_{n-1})$ be the weak solution of 
\begin{equation}
\nabla \cdot \left( \rho_{n-1}(x) \xi_n (x) \right) = - \left( f(x)
- \hat{f}_{n-1} \right)\rho_{n-1}(x).
\label{eqn:defn_xi}
\end{equation} 
Such a solution exists by Theorem~\ref{thm:thm1}, and moreover,
\begin{equation}
\int \rho_{n-1} |\xi_n|^2 \ud x < (\text{const.}) \int \rho_{n-1}
|f-\hat{f}_{n-1}|^2 \ud x < C,
\label{eq:bound_on_xi_n_min_1}
\end{equation}
where the (const.) is independent of $n$ (by
~\Prop{prop:exist_uniq}~(iv)), and using~\Prop{prop:exist_uniq}~(ii),
\begin{equation}
\int \rho_n(x) |\xi_n(x)|^2 \ud x \le C \exp(\alpha |\Delta Z_n|) \int \rho_{n-1}(x) |\xi_n(x)|^2 \ud x.
\label{eq:bound_on_xi}
\end{equation}

\smallskip

Using the E-L equation~\eqref{eqn:el_appdx_pf1} with $g=f$ and $\varsigma=\xi_n$, for $n=1,2,\hdots,N$:
\[
\hat{f}_n - \hat{f}_{n-1} = \int \rho_{n}(x) \left[ \Delta Z_{n} -
h(x) \Delta t_n \right] \nabla h(x) \cdot \xi_n(x) \ud x\,,
\]
and, upon summing,
\begin{equation}
\hat{f}_N = \hat{f}_0 + \sum_{n=1}^N \int \rho_{n}(x) \left[ \Delta Z_{n} -
h(x) \Delta t_n \right] \nabla h(x) \cdot \xi_n(x) \ud x.
\label{eqn:series}
\end{equation}

The remainder of the proof thus is to show that, as $\Delta t_n
\rightarrow 0$, the summation converges to the It\^o integral in~\eqref{eqn:KS},
where the convergence is in $L^2$. 

\medskip

We fix $n$, and express the summand as  
\begin{equation}
S_n:= \int \rho_n(x) \nabla h(x) \cdot \xi_n(x) \ud x
\, \Delta Z_n - \int \rho_n(x) h(x) \nabla h(x) \cdot \xi_n(x) \ud x \, \Delta t_n
\label{eqn:summand_defn}
\end{equation}
Each of these terms is well-defined because $\xi_n \in L^2(\R^d\to\R^d;\rho_{n})$ (see~\eqref{eq:bound_on_xi}), and $h,\nabla h\in L^\infty$. 

The two terms are simplified separately in the following two steps:

\smallskip

\noindent {\bf Step 1.} Consider the second term $- \left( \int \rho_n(x)
h(x) \nabla h(x) \cdot \xi_n(x) \ud x\right)  \Delta t_n$.  Let $\eta_n \in L^2(\R^d\to\R^d;\rho_{n-1})$
denote the weak solution of 
\[
\nabla \cdot \left( \rho_{n-1}(x) \eta_n (x) \right) = - \left( h(x) \nabla h(x) \cdot \xi_n(x)
- \int \rho_{n-1} h \nabla h \cdot \xi_n \ud x \right) \rho_{n-1}(x).
\] 
Repeating the earlier argument, using~\eqref{eq:bound_on_xi_n_min_1}
and the fact that $h,\nabla h\in L^\infty$,
\[
\int \rho_{n-1}(x)   |\eta_n(x)|^2 \ud x < C,
\]
and
\begin{equation}
\int \rho_n(x) |\eta_n(x)|^2 \ud x \le C \exp(\alpha |\Delta Z_n|) \int \rho_{n-1}(x) |\eta_n(x)|^2 \ud x.
\label{eq:bound_on_eta}
\end{equation}

\notes{Several errors were spotted by the reviewers in the text that follows.   I have corrected most.
I am confused about the "recurring error" involving $\Expect[|{\cal E}_n^{(1)}|^2]^\half$.  And, do we need to reduce $\alpha$ by two to deal with squares?  The reviewer thinks so.  I need to study this.  -spm}
\notes{yes, we need to divide by 2; and yes, the reviewer is also
  correct about the recurring error -- corrected - PGM}

Using the E-L equation~\eqref{eqn:el_appdx_pf1} with $g=h \nabla h
\cdot \xi_n$ and $\varsigma=\eta_n$,
\begin{equation}
\int \rho_n h \nabla h \cdot \xi_n \ud x \, \Delta t_n = \int \rho_{n-1}
h \nabla h \cdot \xi_n \ud x \, \Delta t_n + {\cal E}_n^{(1)},
\label{eqn:ea1}
\end{equation}
where 
\[
{\cal E}_n^{(1)} = \int \rho_n \nabla h \cdot \eta_n \ud x \, (\Delta Z_n \Delta
t_n) - \int \rho_n h \nabla h \cdot \eta_n \ud x \, (\Delta t_n)^2. 
\]
In order to establish convergence, we will require bounds for
the two integrals.  Since $h,\nabla h\in L^{\infty}$, using~\eqref{eq:bound_on_eta},
\begin{equation}
|{\cal E}_n^{(1)}| < C \exp(\frac{\alpha}{2} |\Delta Z_n|) \left( \int
  \rho_{n-1}(x) |\eta_n(x)|^2 \ud x \right)^{\frac{1}{2}} \left( |\Delta Z_n \Delta t_n| + (\Delta t_n)^2|\right).
\label{eq:bound_for_En1}
\end{equation}
Given the uniform $L^2$ bound on $C$, it follows that $\Expect[|{\cal E}_n^{(1)}|^2]^\half = O(\barDelta_N^{3/2})$, uniformly in $n$.

\smallskip

\noindent {\bf Step 2.}  The calculation for the first term in \eqref{eqn:summand_defn},
$\left(\int \rho_n(x) \nabla h(x) \cdot \xi_n(x) \ud x\right) \Delta Z_n$, is similar. Let
$\zeta_n\in L^2(\R^d\to\R^d;\rho_{n-1})$ denote the weak solution of 
\begin{equation}
\nabla \cdot \left( \rho_{n-1}(x) \zeta_n (x) \right) = - \left( \nabla h(x) \cdot \xi_n(x)
- \int \rho_{n-1} \nabla h \cdot \xi_n \ud x \right) \rho_{n-1}(x).
\label{eqn:defn_zeta}
\end{equation}
As before,
$\int \rho_{n-1} |\zeta_n|^2 \ud x < C$,   
and
using the E-L equation~\eqref{eqn:el_appdx_pf1} with $g=\nabla
h\cdot\xi_n$ and $\varsigma = \zeta_n$,
\begin{equation}
\int \rho_n \nabla h \cdot \xi_n \ud x \, \Delta Z_n = \int \rho_{n-1}
\nabla h \cdot \xi_n\ud x \, \Delta Z_n + \int \rho_n \nabla h \cdot \zeta_n \ud x \, (\Delta
Z_n)^2 + {\cal E}_n^{(2)},
\label{eqn:ea2}
\end{equation}
where 
\[
{\cal E}_n^{(2)} = - \int \rho_n h \nabla h \cdot \zeta_n \ud x \, (\Delta Z_n \Delta t_n), 
\]
and using the a priori bound for $\zeta_n$,
\begin{equation}
|{\cal E}_n^{(2)}| < C  \exp(\frac{\alpha}{2} |\Delta Z_n|) \left( \int \rho_{n-1}(x) |\zeta_n(x)|^2 \ud x \right)^{\frac{1}{2}}  \; |(\Delta Z_n \Delta t_n)|.
\label{eq:bound_for_En2}
\end{equation}

\smallskip

Using the two formulae~\eqref{eqn:ea1} and~\eqref{eqn:ea2}
from Steps 1 and 2, the summand~\eqref{eqn:summand_defn} is given by
\begin{align}
S_n & = \int \rho_n(x) \nabla h(x) \cdot \xi_n(x) \ud x
\, \Delta Z_n - \int \rho_n(x) h(x) \nabla h(x) \cdot \xi_n(x) \ud x
\, \Delta t_n
\nonumber\\
& = \int \rho_{n-1} \nabla h \cdot \xi_n \ud x \, \Delta Z_n + \int
\rho_n \nabla h \cdot \zeta_n \ud x \, (\Delta Z_n)^2 - \int \rho_{n-1}
h \nabla h \cdot \xi_n \ud x \, \Delta t_n \nonumber\\
& \quad +  {\cal E}_n^{(1)} +  {\cal E}_n^{(2)}.
\label{eqn:summand_step12}
\end{align}
Both error terms satisfy $\Expect[|{\cal E}_n^{(i)}|^2]^\half =
O(\barDelta_N^{3/2})$ for $i=1,2$.

In the following step, the first two integrals
in~\eqref{eqn:summand_step12} are further simplified.

\smallskip

\noindent {\bf Step 3.} For the first integral, integration by parts
gives
\begin{align}
\int \rho_{n-1} \nabla h \cdot \xi_n \ud x \, \Delta Z_n &= - \int h \nabla \cdot
(\rho_{n-1} \xi_n) \ud x \, \Delta Z_n \nonumber\\
& = \int \rho_{n-1}(x) h(x) (f(x) - \hat{f}_{n-1}) \ud x \, \Delta Z_n,
\label{eqn:summand_step12_term1}
\end{align}
where the second equality follows from~\eqref{eqn:defn_xi}.  

\smallskip

For simplifying the second integral, the E-L
equation~\eqref{eqn:el_appdx_pf1} is used once more.  As before,  let
$\varphi_n\in L^2(\R^d\to\R^d;\rho_{n-1})$ denote the weak solution of 
\[
\nabla \cdot \left( \rho_{n-1}(x)  \varphi_n(x)  \right) = - \left( \nabla
  h(x) \cdot \zeta_n(x)
- \int \rho_{n-1} \nabla h \cdot \zeta_n \ud x \right) \rho_{n-1}(x),
\] 
together with an a priori bound $\int \rho_{n-1} |\varphi_n|^2 \ud x < C$.

The E-L equation~\eqref{eqn:el_appdx_pf1} then gives
\begin{equation}
\int \rho_n \nabla h \cdot \zeta_n \ud x \, (\Delta Z_n)^2 = \int \rho_{n-1}
\nabla h \cdot \zeta_n \ud x \, (\Delta Z_n)^2 + {\cal E}_n^{(3)},
\label{eqn:summand_step12_term2}
\end{equation}
where 
\[
{\cal E}_n^{(3)} =  \int \rho_n \nabla h \cdot \varphi_n \ud x \,
(\Delta Z_n)^3 - \int \rho_n h \nabla h \cdot \varphi_n \ud x \,
(\Delta Z_n)^2 \Delta t_n,
\]
and using the a priori bound for $\varphi_n$,
\notes{zeta now varphi --spm}
\begin{equation}
|{\cal E}_n^{(3)}| < C  \exp(\frac{\alpha}{2}|\Delta Z_n|) \left( \int \rho_{n-1}(x) |\varphi_n(x)|^2 \ud x \right)^{\frac{1}{2}} \; \left( |(\Delta Z_n)^3| + |(\Delta Z_n)^2
  \Delta t_n)|\right).
\label{eq:bound_for_En3}
\end{equation}
Hence this third error term is also uniformly bounded,    $\Expect[|{\cal E}_n^{(3)}|^2]^\half = O(\barDelta_N^{3/2})$.

Substituting the
formulae~\eqref{eqn:summand_step12_term1}-\eqref{eqn:summand_step12_term2}
in~\eqref{eqn:summand_step12}, the summand is given by
\begin{align}
S_n  & =  \int \rho_{n-1} h (f - \hat{f}_{n-1}) \ud x \,
\Delta Z_n \nonumber\\
 & \quad + \int \rho_{n-1}
\nabla h \cdot \zeta_n \ud x \, (\Delta Z_n)^2 - \int \rho_{n-1}
h \nabla h \cdot \xi_n \ud x \, \Delta t_n \nonumber\\
& \quad +  {\cal E}_n^{(1)} +  {\cal E}_n^{(2)} +  {\cal E}_n^{(3)},
\label{eqn:summand_step123}
\end{align}
where recall $\xi_n$ is defined by~\eqref{eqn:defn_xi} and $\zeta_n$
by~\eqref{eqn:defn_zeta}.

Now, using integration by parts together with~\eqref{eqn:defn_xi} and~\eqref{eqn:defn_zeta},
\begin{align*}
\int \rho_{n-1}
\nabla h \cdot \zeta_n \ud x &= -\int h \nabla \cdot (\rho_{n-1}
\zeta_n) \ud x\nonumber\\
&= \int \rho_{n-1}h\nabla h\cdot \xi_n \ud x - \int \rho_{n-1} h \ud x
\, \int \rho_{n-1} \nabla h\cdot \xi_n \ud x\nonumber\\
&= \int \rho_{n-1}h\nabla h\cdot \xi_n \ud x +  \int \rho_{n-1} h \ud x
\, \int h \nabla  \cdot( \rho_{n-1} \xi_n) \ud x\nonumber\\
&= \int \rho_{n-1}h\nabla h\cdot \xi_n \ud x -  \int \rho_{n-1} h \ud x
\,\int \rho_{n-1} h (f - \hat{f}_{n-1}) \ud x.\nonumber
\end{align*}
Substituting the result of this calculation
in~\eqref{eqn:summand_step123}, the summand is given by
\begin{align}
S_n  & =  \int \rho_{n-1} h (f - \hat{f}_{n-1}) \ud x \,
\Delta Z_n \nonumber\\
 &  -  \int \rho_{n-1} h \ud x
\,\int \rho_{n-1} h (f - \hat{f}_{n-1}) \ud x \, (\Delta Z_n)^2 + \int \rho_{n-1}
h \nabla h \cdot \xi_n \ud x \,  ((\Delta Z_n)^2 - \Delta t_n) \nonumber\\
& \quad +  {\cal E}_n^{(1)} +  {\cal E}_n^{(2)} +  {\cal E}_n^{(3)}.
\label{eqn:summand_step123plus}
\end{align}

\smallskip
\smallskip

\noindent {\bf Step 4.}
Substituting the summand~\eqref{eqn:summand_step123plus} in the
series~\eqref{eqn:series} and letting $\Delta t_n\rightarrow 0$, we arrive at the It\^o integral:
\begin{align}
\hat{f}_t & = \hat{f}_0 + \int_0^t \int \rho(x,s) h (x) (f(x) -
\hat{f}_s) \ud x \, \ud Z_s \nonumber \\
 & \quad \quad \quad +  \int_0^t \int \rho(x,s) h (x) \ud x \,
\int \rho(x,s) h (x) (f(x) - \hat{f}_s) \ud x \, \ud s \nonumber\\
& = \hat{f}_0 + \int_0^t \int \rho(x,s) (h(x) - \hat{h}_s) f(x)
\ud x \, (\ud Z_s - \hat{h}_s
\ud s).
\end{align}

Convergence is obtained on applying the following $L^2$ limits.

\noindent (i) Since $\xi_n$ is a weak solution of the Poisson's
equation~\eqref{eqn:defn_xi}, 
\[
\sum_{n=1}^{N} \left( \int \rho_{n-1}
h \nabla h \cdot \xi_n \ud x \right) ((\Delta Z_n)^2 - \Delta t_n)
\rightarrow 0 \quad \text{as}\;\; N\rightarrow\infty.
\]
The proof of this limit is based on the following result for the summand.  Fix $s\in\R_+$, and let $n$ and $N$ tend to infinity in such a way that $t_n\to s$ as $n,N\to \infty$.  We then have
 \notes{reviewer says, careless!  Revised to make sure $t_n\to s$} 
\[
\begin{aligned}
\lim_{n,N\to\infty}
 \int \rho_{n-1} h \nabla h \cdot \xi_n \ud x 
 & = 
\lim_{n,N\to\infty} \frac{1}{2} \int h^2(f -\hat{f}_{n-1}) \rho_{n-1} \ud x  
\\
 &=
 \half \int h^2(f-\hat{f}_s) \rho \ud x.
\end{aligned}
\]

\medskip

\noindent (ii) The apriori bounds~\eqref{eq:bound_for_En1},~\eqref{eq:bound_for_En2}
and~\eqref{eq:bound_for_En3} are used to show that
\begin{align*}
{\cal E} =:
\sum_{n=1}^{N} |{\cal E}_n^{(1)}|  +  |{\cal E}_n^{(2)}| +  |{\cal
  E}_n^{(3)}| \rightarrow 0 \quad \text{as}\;\; N\rightarrow\infty\,, 
\end{align*}
where the convergence is in $L^2$. This follows because we have the
bound $\Expect[{\cal E}^2] = O(\barDelta^{1/2})$.  \notes{Reviewer asks, who cares?  --spm}

\subsection{Derivation of the feedback particle filter}
\label{appdx:FPF}

We consider the cumulative objective function~\eqref{e:JN_main}, repeated below:
\begin{equation}
J^{(N)}(\underline{s}) \doteq \sum_{n=1}^N \left(  I_n(s_n^\#(\rho_{n-1})) - \frac{\Delta t_n}{2} Y_n^2 \right),
\label{e:JN}
\end{equation}
where $\underline{s} \doteq (s_1,s_2,\hdots,s_N)$ denotes a sequence of
diffeomorphisms.  The sequence $\{ \rho_{n-1}(x) \}_{n=1}^N$ is
assumed given here (see~\eqref{eq:Bayes_with_Yn}).  The
objective is to construct a minimizer, denoted as $\underline{\chi} \doteq
(\chi_1,\chi_2,\hdots,\chi_N)$, and consider the limit as
$N\rightarrow \infty$,  $\barDelta_N \rightarrow 0$.  

The calculations in this section are strictly formal.  Generally, the
technicalities are downplayed in the interest of succinctly describing 
the main calculations. The Einstein tensor notation is employed for some
of the more laborious calculations.

The optimization problem~\eqref{e:JN} can be considered term-by-term
since $\{\rho_{n-1}\}$ is fixed for fixed $N$ and $\Delta t_n$.   With
these parameters fixed,  and attention focused to the $n$th summand,
we recast the optimization problem as one over $s_n$ as follows:
\notes{reviewer asks about existence and uniqueness --spm}
\begin{equation}
\begin{aligned}
\!\!
I_n(s_n) \doteq - \int \rho_{n-1} (x) &\ln(\det (D s_n (x))) \ud x  - \int \rho_{n-1} (x) \ln \frac{\rho_{n-1} (s_n(x))}{\rho_{n-1} (x)} \ud x\\
& + \frac{\Delta t_n}{2} \int \rho_{n-1} (x) (Y_n - h(s_n(x)))^2
\ud x,
\end{aligned}
\label{eqn:obj_y}
\end{equation}
where we have used the identity $\rho_n(s_n(x)) \det(D s_n (x))
=\rho_{n-1}(x)$.  As in the initial problem formulation, the minimizer
is denoted as $\chi_n$.  The minimal value exists because the
functional $I_n(\cdot)$ is bounded from below -- see the discussion
following the introduction of the functional $I_n(\rho)$
in~\eqref{eqn:obj_fn}.  In fact, a minimizer may be obtained in
closed form by considering the transport problem   
\[
s_n^\#(\rho_{n-1}) = \rho_n.
\]
Existence of solutions to such problems have been
extensively investigated in the optimal transportation literature;
cf.,~\cite{Villani_opt_transport}.  As with the derivation of the
nonlinear filter, we proceed via analysis of the first variation.
Such an approach is more tractable and leads to the elegant form of the feedback particle filter.  Once
the filter has been derived, its optimality is established by showing
the filter to be exact; cf.,~Proof of Theorem~\ref{thm:exact} in
Sec.~\ref{apdx:pf_Kushner}.     

The first-order conditions for optimization problem~\eqref{eqn:obj_y}
appear in the following Lemma.
Given $\pertu\in C_c^1(\R^d,\R^d)$, the directional derivative is denoted
\[
\delta I_n(\chi_n) \cdot \pertu \doteq \frac{\ud}{\ud \epsy} I_n(\chi_n +\epsy \pertu) \Big|_{\epsy=0} .
\]

\begin{lemma}[First-Order Optimality Conditions] 
\label{lem:EL2}
Consider the minimization problem~\eqref{eqn:obj_y} under
Assumptions~(A1)-(A2).  
The first-order optimality condition
 for the minimizer $\chi_n(x)$ is given by
\begin{equation}
\begin{aligned}
0 = \delta I_n(\chi_n) \cdot \pertu    =&
 \int \rho_{n-1}(x)
\tr \left( D \chi_n^{-1}(x) D \pertu (x) \right) \ud x  
\\
&
+
 \int \rho_{n-1}(x)
\frac{1}{\rho_{n-1}(\chi_n(x))} \nabla \rho_{n-1} (\chi_n(x)) 
\cdot \pertu (x) \ud x   
\\
&
+
  \int \rho_{n-1}(x) \left( \Delta Z_{n} -
h(\chi_n(x)) \Delta t_n \right) \nabla h(\chi_n(x)) \cdot \pertu (x) \ud x ,
\end{aligned}
\label{eqn:EL-transport}
\end{equation}
where $\pertu\in C_c^1(\R^d,\R^d)$ is an arbitrary
perturbation of $\chi_n$.
\qed
\end{lemma}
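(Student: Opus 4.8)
The plan is to compute the directional derivative
$\delta I_n(\chi_n)\cdot\pertu = \frac{\ud}{\ud\epsy}I_n(\chi_n+\epsy\pertu)\big|_{\epsy=0}$ directly, by differentiating each of the three integrals in~\eqref{eqn:obj_y} under the integral sign and evaluating at $\epsy=0$. Here $s_n = \chi_n + \epsy\pertu$, so that $Ds_n = D\chi_n + \epsy D\pertu$. Since $\chi_n$ is a minimizer, the resulting sum must vanish for every admissible $\pertu\in C^1_c(\R^d,\R^d)$, which is precisely~\eqref{eqn:EL-transport}. Throughout I would exploit that $\pertu$ has compact support, so $s_n$ coincides with $\chi_n$ outside a fixed compact set; this keeps the Jacobian $Ds_n$ invertible for small $\epsy$ and confines the $\epsy$-dependence of every integrand to that compact set, exactly as in the proof of Lemma~\ref{lem:EL}.

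For the first term $-\int\rho_{n-1}\ln\det(Ds_n)\ud x$, I would invoke Jacobi's formula $\frac{\ud}{\ud\epsy}\ln\det(D\chi_n+\epsy D\pertu) = \tr\!\big((D\chi_n+\epsy D\pertu)^{-1}D\pertu\big)$, which at $\epsy=0$ produces $\tr(D\chi_n^{-1}D\pertu)$, matching the first integrand of~\eqref{eqn:EL-transport}. For the second term $-\int\rho_{n-1}\ln\frac{\rho_{n-1}(s_n)}{\rho_{n-1}}\ud x$, only the composition $\rho_{n-1}(s_n(x))$ depends on $\epsy$, and the chain rule gives $\frac{\ud}{\ud\epsy}\ln\rho_{n-1}(\chi_n+\epsy\pertu)\big|_{\epsy=0} = \rho_{n-1}(\chi_n)^{-1}\nabla\rho_{n-1}(\chi_n)\cdot\pertu$, the second integrand. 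For the third term I would expand the square, differentiate using $\frac{\ud}{\ud\epsy}h(s_n)\big|_{\epsy=0}=\nabla h(\chi_n)\cdot\pertu$, and substitute $\Delta Z_n = Y_n\Delta t_n$ to rewrite $\Delta t_n(Y_n-h(\chi_n))$ as $\Delta Z_n - h(\chi_n)\Delta t_n$, producing the third integrand. Summing the three contributions yields $\delta I_n(\chi_n)\cdot\pertu$ equal (up to a common overall sign) to the right-hand side of~\eqref{eqn:EL-transport}; setting it to zero gives the stated optimality condition, the overall sign being immaterial once equated to zero.

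The main obstacle is justifying the interchange of $\frac{\ud}{\ud\epsy}$ with $\int\ud x$ for each term. As in Lemma~\ref{lem:EL}, compact support of $\pertu$ makes the relevant difference quotients converge uniformly on the fixed compact set where $s_n\neq\chi_n$, while outside it the integrands are $\epsy$-independent. I would then verify integrability of each limiting integrand using Proposition~\ref{prop:exist_uniq}(i): since $\rho_{n-1}=e^{-\smopot_{n-1}}$ with $\smopot_{n-1}\in C^2$, $\nabla\smopot_{n-1}(x)=O(|x|)$, and $D^2\smopot_{n-1}\in L^\infty$, we have $\rho_{n-1}(\chi_n)^{-1}\nabla\rho_{n-1}(\chi_n) = -\nabla\smopot_{n-1}(\chi_n)$ of at most linear growth, integrated against $\rho_{n-1}\in\cal P$ (finite second moment); combined with the boundedness of $h,\nabla h$ from Assumption~(A2) and of $D\chi_n^{-1}$ on the support of $\pertu$, each integral is finite. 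Since the surrounding text flags these computations as strictly formal, I would keep this verification brief and lean on the parallel with Lemma~\ref{lem:EL} together with the growth and boundedness afforded by Assumptions~(A1)--(A2).
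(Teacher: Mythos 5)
Your proposal is correct and follows essentially the same route as the paper's own proof: a term-by-term evaluation of $\frac{\ud}{\ud\epsy} I_n(\chi_n+\epsy\pertu)\big|_{\epsy=0}$, using the log-determinant (Jacobi) formula for the first integral and the chain rule for the second and third, with $\Delta Z_n = Y_n\Delta t_n$ absorbing the $\Delta t_n$ factor. Your remark that the computed variation agrees with the stated right-hand side of~\eqref{eqn:EL-transport} only up to an overall sign is accurate --- the paper's own proof produces the three terms with opposite signs and leaves this unremarked --- and your additional care in justifying the differentiation under the integral (via compact support of $\pertu$ and the growth bounds from Proposition~\ref{prop:exist_uniq} and Assumption~(A2)) goes slightly beyond the paper, which declares these calculations formal.
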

\begin{proof}
The three terms in~\eqref{eqn:EL-transport} are obtained by explicitly evaluating the derivative $\frac{\ud}{\ud \epsy} I_n(\chi_n +\epsy \pertu)$, at $\epsilon=0$, for the three terms in~\eqref{eqn:obj_y}:

\noindent (i) The first term is given by
\[
- \int \rho_{n-1} (x) \left[ \ln(\det (D\chi_n(x))) + \ln(\det(I+\epsilon
D\chi_n^{-1}(x) D\pertu (x))) \right] \ud x.
\]
Therefore, for the first term,
\begin{align*}
\left. \frac{\ud}{\ud \epsilon} [ \cdots ] \right|_{\epsilon=0} 
& = - \int
\rho_{n-1} (x) \left. \frac{\ud}{\ud \epsilon} \ln(\det(I+\epsilon
D\chi_n^{-1}(x) D\pertu (x))) \right|_{\epsilon=0} \ud x\\
& = -  \int
\rho_{n-1} (x) \text{tr} (D\chi_n^{-1}(x) D\pertu (x)) \ud x.
\end{align*}

\medskip

\noindent (ii) The second term is obtained by a direct calculation
\begin{align*}
\left. \frac{\ud}{\ud \epsilon} [ \cdots ] \right|_{\epsilon=0}  &=
- \int \rho_{n-1} (x) \left. \frac{\ud}{\ud \epsilon} \ln \rho_{n-1}
  (\chi_n(x) + \epsilon \pertu (x))  \right|_{\epsilon=0} \ud x \\
& =  - \int \rho_{n-1}(x)
\frac{1}{\rho_{n-1}(\chi_n(x))} \nabla \rho_{n-1} (\chi_n(x)) 
\cdot \pertu  (x) \ud x.
\end{align*}

\medskip

\noindent (iii) Similarly for the third term,
\begin{align*}
\left. \frac{\ud}{\ud \epsilon} [ \cdots ] \right|_{\epsilon=0}  &=
\frac{\Delta t_n}{2} \int \rho_{n-1} (x)
\left. \frac{\ud}{\ud \epsilon} \, (Y_{n}-h( \chi_n(x) + \epsilon
  \pertu (x))
    )^2 \right|_{\epsilon=0} \ud x\\
& = - \int \rho_{n-1}(x) \left( \Delta Z_{n} -
h(\chi_n(x)) \Delta t_n \right) \nabla h(\chi_n(x)) \cdot \pertu  (x) \ud x.
\end{align*}
\end{proof} 

Since our interest is in the limit as $\Delta t_n\rightarrow 0$ and
$N\to \infty$, we now restrict to  diffeomorphisms of the form
$\chi_n(x) = x + \v(x,n) \Delta Z_n + u(x,n) \Delta t_n$, where the
appropriate function spaces are: $\v\in H^1(\R^d\to\R^d;\rho_{n-1})$ and $u\in
H^1(\R^d\to\R^d;\rho_{n-1})$.  Starting from~\eqref{eqn:EL-transport}, the following is established in Appendix~\ref{appdx:EL_K_u_derivation}:
\begin{equation}
\begin{aligned}
\delta I_n(\chi_n) \cdot \pertu &  = 
E_z(n)  \, \Delta Z_n +
E_\Delta(n) \, \Delta t_n + O(\Delta t_n^2,\Delta Z_n \Delta
  t_n,\Delta Z_n^3),
\end{aligned}
\label{e:Jpertn}
\end{equation}
where, denoting $\v(x,n)\doteq (\v_1(x,n),\hdots,\v_d(x,n))$, 
$u(x,n)\doteq (u_1(x,n),\hdots,u_d(x,n))$ and expressing $\pertu(x)=\pertu(x,n)\doteq (\pertu_1(x),\hdots,\pertu_d(x))$, 
the following equations give expressions for $E_z$ and $E_\Delta$
(expressed using Einstein's tensor notation):   
\begin{eqnarray} 
E_z &=& - \int \frac{\partial}{\partial x_j} \left( \rho_{n-1} \frac{\partial
    \v_j}{\partial x_i} \right)\, \pertu_i \ud x  - \int \rho_{n-1} \frac{\partial^2 \ln \rho_{n-1}}{\partial
  x_i \partial x_j} \, \v_j \, \pertu_i \ud x  - \int \rho_{n-1} \frac{\partial h}{\partial x_i} \pertu_i 
  \ud x 
  \nonumber
  \\
& =& -\int \rho_{n-1} \frac{\partial }{\partial x_i} \left( \frac{1}{\rho_{n-1}}
  \frac{\partial}{\partial x_j} (\rho_{n-1} \v_j ) \right) \pertu_i 
\ud x - \int \rho_{n-1} \frac{\partial h}{\partial x_i} \pertu_i 
  \ud x,   
   \label{e:E2}
\\[.2cm]
E_\Delta & =& -\int \rho_{n-1} \frac{\partial }{\partial x_i} \left( \frac{1}{\rho_{n-1}}
  \frac{\partial}{\partial x_j} (\rho_{n-1} u_j ) \right) \pertu_i \ud x 
+ \int \rho_{n-1} \left( h \frac{\partial
    h}{\partial x_i} - \frac{\partial^2 h}{\partial
  x_i \partial x_j} \v_j \right)\, \pertu_i 
  \ud x 
       \nonumber
  \\
& & \qquad 
- \frac{1}{2} \int \rho_{n-1}
\frac{\partial^3 \ln \rho_{n-1}}{\partial
  x_i \partial x_j \partial x_k} \, \v_j\,  \v_k \, \pertu_i
(x) \ud x  
+ \int
\frac{\partial }{\partial x_j} \left( \rho_{n-1} \frac{\partial
    \v_j}{\partial x_k} \frac{\partial
    \v_k}{\partial x_i}  \right)\, \pertu_i \ud x. 
\nonumber\\
\label{e:E3}
\end{eqnarray}

We now return to the objective function $J^{(N)}(s) $ defined in
\eqref{e:JN}.  For any fixed $N$, the first order optimality condition
for the minimizer $\underline{\chi} \doteq
(\chi_1,\chi_2,\hdots,\chi_N)$ is now immediate:
\begin{equation}
\begin{aligned}
0=\delta J^{(N)}(\underline{\chi})\cdot\underline{\pertu}  =
\sum_{n=1}^N 
E_z(n)  &\Delta Z_n +
E_\Delta(n) \Delta t_n 
+
\sum_{n=1}^N
\Bigl[ O(\Delta t_n^2,\Delta Z_n \Delta
  t_n,\Delta Z_n^3) \Bigr],
\end{aligned}
\label{e:Jpert}
\end{equation}
where $\underline{\pertu}(x) \doteq (\pertu(x,1),\hdots,\pertu(x,N))$
and $\pertu(\cdot,n)\in C_c^1(\R^d,\R^d)$ is an arbitrary
perturbation.  Recall now, $\chi_n(x) \doteq x + \v(x,n) \Delta Z_n +
u(x,n)\Delta t_n$.  The sequence $\{\rho_n\}$, $\{\v(x,n)\}$, $\{u(x,n)\}$ and $\{\pertu(x,n)\}$ are
used to construct, via interpolation, $\rho^{N}(x,t)$, $\v^{(N)}(x,t)$,
$u^N(x,t)$ and $\pertu^N(x,t)$, respectively.  
Recall $\rho^{(N)} \rightarrow
\rho(x,t)$, given in~\eqref{eqn:rho_limit_explicit}.  Likewise we
formally denote the limit of  $\v^{(N)}(x,t)$,
$u^N(x,t)$ and $\pertu^N(x,t)$ as $\v(x,t)$, $u(x,t)$ and $\pertu(x,t)$,
respectively.

With this notation, the right-hand side
of~\eqref{e:Jpert}, as $N\rightarrow \infty$, is expressed as an It\^{o} integral,
\notes{missing subscript $s$ and $i$ here - I have not yet repaired.   I did make stylistic changes below.
I have stopped serious editing here since we need time to read. -spm}
\begin{align*}
& - \int_0^T \int \rho(x,s) \left( \frac{\partial }{\partial x_i} \Big( \frac{1}{\rho}
  \frac{\partial}{\partial x_j} (\rho \v_j ) \Big) + \frac{\partial
  h}{\partial x_i} \right) \pertu_i (x,s) \ud x \, \ud Z_s 
\\ 
& - \int_0^T \int \rho (x,s) \left( \frac{\partial }{\partial x_i} \Big( \frac{1}{\rho}
  \frac{\partial}{\partial x_j} (\rho u_j ) \Big) - h \frac{\partial
    h}{\partial x_i} + \frac{\partial^2 h}{\partial
  x_i \partial x_j} \v_j \right.
\\ &\quad\quad\quad\quad \left.
+ \frac{1}{2} 
\frac{\partial^3 \ln \rho}{\partial
  x_i \partial x_j \partial x_k} \, \v_j\,  \v_k  - \frac{1}{\rho} \frac{\partial }{\partial x_j} \Big( \rho \frac{\partial
    \v_j}{\partial x_k} \frac{\partial
    \v_k}{\partial x_i}  \Big) \right) \pertu_i (x,s) \ud x \, \ud s.
\end{align*}

Since $\delta J^{(N)}(\underline{\chi})\cdot \underline{\pertu}=0$ by optimality, and $\underline{\pertu}$ is arbitrary,  we obtain weak-sense differential equations for $\v$ and $u$.   The following two equations follow, also defined in the weak sense: 
\begin{align}
\frac{\partial }{\partial x_i} \left( \frac{1}{\rho}
  \frac{\partial}{\partial x_j} (\rho \v_j ) \right) & = - \frac{\partial
  h}{\partial x_i},
\label{eq:EL_Kappdx}
\\
\frac{\partial }{\partial x_i} \left( \frac{1}{\rho}
  \frac{\partial}{\partial x_j} (\rho u_j ) \right) & = h \frac{\partial
    h}{\partial x_i} - \frac{\partial^2 h}{\partial
  x_i \partial x_j} \v_j 
  \nonumber
  \\
  &
  \quad
  - \frac{1}{2} 
\frac{\partial^3 \ln \rho}{\partial
  x_i \partial x_j \partial x_k} \, \v_j\,  \v_k  + \frac{1}{\rho} \frac{\partial }{\partial x_j} \left( \rho \frac{\partial
    \v_j}{\partial x_k} \frac{\partial
    \v_k}{\partial x_i}  \right).
\label{eq:EL_uappdx}
\end{align}

The BVP~\eqref{e:bvp_divergence_multi} is obtained by integrating~\eqref{eq:EL_Kappdx} once:
\[
  \frac{\partial}{\partial x_j} (\rho \v_j ) = - (h - \hat{h}) \rho,  
\]  
where $\hat{h} \doteq \int h(x) \rho(x) \ud x$. 
Using this the righthand-side of~\eqref{eq:EL_uappdx} is simplified, and the resulting equation is given by
\notes{reviewer does not understand this conclusion --spm}
\begin{equation}
\frac{\partial }{\partial x_i} \left( \frac{1}{\rho}
  \frac{\partial}{\partial x_j} (\rho u_j ) \right)  = \frac{\partial
  h}{\partial x_i} \hat{h} + \frac{1}{2} 
\frac{\partial }{\partial x_i} \left(  \frac{1}{\rho}
  \frac{\partial^2}{\partial x_j \partial x_k} (\rho \v_j \v_k ) \right).
\label{eq:EL_u}
\end{equation}
The simplification is obtained by first expressing the two terms
involving $h$ in the righthand-side of \eqref{eq:EL_uappdx} as,
\[
h \frac{\partial
    h}{\partial x_i} - \frac{\partial^2 h}{\partial
  x_i \partial x_j} \v_j = \frac{\partial
  h}{\partial x_i} \hat{h} + \frac{1}{\rho} \frac{\partial}{\partial
  x_j} (\rho \v_j ) \frac{\partial }{\partial x_i} \left( \frac{1}{\rho}
  \frac{\partial}{\partial x_k} (\rho \v_k ) \right) + \frac{\partial^2 }{\partial
  x_i \partial x_j} \left( \frac{1}{\rho}
  \frac{\partial}{\partial x_k} (\rho \v_k ) \right) \v_j.
\]
Substituting this in the righthand-side of~\eqref{eq:EL_uappdx} gives the first term $\frac{\partial
  h}{\partial x_i} \hat{h}$ in the righthand-side of~\eqref{eq:EL_u}, 
and four terms involving only $\rho$ and $\v$.  It is a straightforward but
tedious calculation to simplify these four terms into the form expressed as
the second term in the  righthand-side of~\eqref{eq:EL_u}.

It is readily verified, by direct substitution, that~\eqref{eq:EL_u} admits a closed-form solution:
\begin{equation*}
u_j = - \v_j \frac{(h+\hat{h})}{2} + \frac{1}{2} \frac{\partial
  \v_j}{\partial x_k} \v_k.
\label{eq:u_closed_form}
\end{equation*}
This gives~\eqref{eqn:u_intermsof_v*}.
\notes{this is why I don't like the $\tilde u$ notation}

\subsection{Derivation of Equation~\eqref{e:Jpertn}}
\label{appdx:EL_K_u_derivation}

We substitute $\chi_n(x) = x + \v(x,n) \Delta Z_n + u(x,n) \Delta t_n$
in~\eqref{eqn:EL-transport} and obtain explicit expressions for terms
up to order $O(\Delta Z_n),\,O(\Delta t_n)$. Since we are eventually
interested in the limit as $\Delta t_n \rightarrow 0$, we use the It\^{o}'s rule
$(\Delta Z_n)^2 = \Delta t_n$ to simplify the calculations.  
The calculations for the three terms appearing in~\eqref{eqn:EL-transport} are as follows:

\medskip

\noindent (i), The third term is expressed as
\begin{align*}
- \int \rho_{n-1}(x) \left( \Delta Z_{n} -
h(x + \v \Delta Z_n + u \Delta t_n) \Delta t_n \right) \nabla h(x
+ \v \Delta Z_n + u \Delta t_n) \cdot \pertu  (x) \ud x.
\end{align*}
Using Taylor series,
\begin{align*}
h(x + \v \Delta Z_n + u \Delta t_n) & = h(x) + O(\Delta
Z_n,\Delta t_n), \\
 \frac{\partial  h}{\partial x_i} (x + \v \Delta Z_n + u \Delta t_n) & = \frac{\partial h}{\partial x_i}(x) + \frac{\partial^2 h}{\partial
  x_i \partial x_j}(x) \v_j(x) \, \Delta Z_n + O(\Delta t_n,\Delta Z_n \Delta
t_n,\Delta Z_n^2),
\end{align*}
the third term is simplified as
\begin{align*}
& = \left( - \int \rho_{n-1}(x) \frac{\partial h}{\partial x_i} \pertu_i (x)
  \ud x \right)\,\Delta Z_n + \left( \int \rho_{n-1}(x) (h \frac{\partial
    h}{\partial x_i} - \frac{\partial^2 h}{\partial
  x_i \partial x_j} \v_j ) \pertu_i (x)
  \ud x \right)\,\Delta t_n \\
& \quad \quad \quad \quad+ O(\Delta t_n^2, \Delta Z_n \Delta t_n, \Delta Z_n^3).
\end{align*}

\medskip

\noindent (ii) The second term in~\eqref{eqn:EL-transport} is
similarly simplified as
\begin{align*}
& - \int \rho_{n-1}(x) \nabla \ln \Bigl( \rho_{n-1}\bigl(x + \v(x) \Delta Z_n + u(x) \Delta
  t_n \bigr) \Bigr) \cdot \pertu  (x) \ud x\\
& = - \int \rho_{n-1}(x)  \frac{\partial }{\partial x_i} \ln (\rho_{n-1}) \, \pertu_i
(x) \ud x  + \left( - \int \rho_{n-1}(x) \frac{\partial^2 \ln \rho_{n-1}}{\partial
  x_i \partial x_j} \, \v_j \, \pertu_i
(x) \ud x \right) \, \Delta Z_n \\
& \quad \quad\quad\quad  + \left( - \int \rho_{n-1}(x) ( \frac{\partial^2 \ln \rho_{n-1}}{\partial
  x_i \partial x_j} \, u_j + \frac{1}{2}
\frac{\partial^3 \ln \rho_{n-1}}{\partial
  x_i \partial x_j \partial x_k} \, \v_j\,  \v_k) \, \pertu_i
(x) \ud x \right) \, \Delta t_n \\ 
& \quad \quad\quad\quad +  O(\Delta t_n^2,\Delta Z_n \Delta
  t_n,\Delta Z_n^3).
\end{align*}

\medskip

\noindent (iii) Finally, for the remaining term in~\eqref{eqn:EL-transport},
\begin{align*}
& - \int \rho_{n-1}(x)
\text{tr} \left( D \chi_n^{-1}(x) D \pertu (x) \right) \ud x \\
& = - \int \rho_{n-1}(x)
\text{tr} \left( (I + D \v(x) \Delta Z_n + D u(x) \Delta
  t_n)^{-1} D \pertu (x) \right) \ud x \\
& = \int \nabla \rho_{n-1}(x) \cdot \pertu (x) \ud x + \left( \int \rho_{n-1}(x)
  \tr(D\v(x)  D \pertu (x)) \ud x \right)\, \Delta Z_n \\
& \quad \quad\quad\quad + \left( \int \rho_{n-1}(x)
  (\tr(Du(x)  D \pertu (x)) - \tr((D\v)^2(x) D \pertu (x))) \ud x
\right)\, \Delta t_n \\
& \quad \quad\quad\quad +   O(\Delta t_n^2,\Delta Z_n \Delta
  t_n,\Delta Z_n^3).
\end{align*}
Now, the terms with trace are simplified by using integration by
parts, e.g., 
\begin{align*}
\int \rho_{n-1}(x) \tr(D\v(x)  D \pertu (x)) \ud x & = \int \rho_{n-1}(x)
\frac{\partial \v_j}{\partial x_i} \frac{\partial \pertu_i}{\partial x_j} \ud
x\\
& = - \int \frac{\partial}{\partial x_j} \left( \rho_{n-1}(x) \frac{\partial
    \v_j}{\partial x_i} \right)\, \pertu_i(x) \ud x.
\end{align*}
As a result, the final term is given by 
\begin{align*}
& = \int \frac{\partial \rho_{n-1}}{\partial x_i}(x) \pertu_i(x) \ud x + \left(- \int \frac{\partial}{\partial x_j} \left( \rho_{n-1}(x) \frac{\partial
    \v_j}{\partial x_i} \right)\, \pertu_i(x) \ud x \right)\, \Delta Z_n \\
& \quad \quad + \left( - \int \frac{\partial }{\partial x_j}
 \left( \rho_{n-1}(x) \frac{\partial
    u_j}{\partial x_i} \right)\, \pertu_i(x) \ud x + \int
\frac{\partial }{\partial x_j} \left( \rho_{n-1}(x) \frac{\partial
    \v_j}{\partial x_k} \frac{\partial
    \v_k}{\partial x_i}  \right)\, \pertu_i(x) \ud x
\right)\, \Delta t_n \\
& \quad \quad\quad\quad+  O(\Delta t_n^2,\Delta Z_n \Delta
  t_n,\Delta Z_n^3)
\end{align*}

\medskip

Collecting the three terms, the E-L equation~\eqref{eqn:EL-transport} is given by 
\[
\delta I_n(\chi_n) \cdot \pertu = E_1 + E_z \Delta Z_n + E_\Delta \Delta t_n + O(\Delta t_n^2,\Delta Z_n \Delta
  t_n,\Delta Z_n^3),
\]
where $E_1$ is the O(1) term given by
\begin{align*}
E_1 & = - \int \rho_{n-1}(x)  \frac{\partial }{\partial x_i} \ln (\rho_{n-1}) \, \pertu_i
(x) \ud x + \int \frac{\partial \rho_{n-1}}{\partial x_i}(x) \pertu_i(x) \ud
x =0,
\end{align*}
$E_z$ is the $O(\Delta Z_n)$ term given in \eqref{e:E2}, and $E_\Delta$ is the $O(\Delta t_n)$ term given in \eqref{e:E3}.

\subsection{Proof of Theorem~\ref{thm:exact}}
\label{apdx:pf_Kushner}

We first assume that $U_t^i$ is admissible.  In this case, the
evolution of $p(x,t)$ is according to the forward equation:
\begin{equation}
\begin{aligned}
\ud p = 
- \nabla \cdot (p\v) \ud Z_t
- \nabla \cdot (pu) \ud t +
\frac{1}{2}\sum_{l,k=1}^d \frac{\partial^2}{\partial x_l \partial
  x_{k}} \left( p \v_l \v_{k} \right) \ud t.
  \end{aligned}
\label{eqn:mod_FPK}
\end{equation}

To prove that the filter is exact, one needs to show that with the choice of $\{u,\v\}$ given
by~\eqref{e:bvp_divergence_multi}-\eqref{eqn:u_intermsof_v*},  we have
$\ud p(x,t) = \ud p^*(x,t)$, for all $x$ and $t$,  in the sense that
they are defined by identical stochastic differential equations.   Recall $\ud p^*$ is
defined according to the K-S equation~\eqref{eqn:Kushner_eqn}.  The
strong form of evolution equations is used for notational convenience.
The proof with the weak form is entirely analogous, by using
integration by parts.

Recall that the gain function $\v$ is a solution of Poisson's equation,
\begin{equation}
\nabla \cdot (p\v) = -p(h-\hat{h}) \, .
\label{eqn:bvp_multi_Z_matrix}
\end{equation}
On multiplying both sides of~\eqref{eqn:u_intermsof_v*} by $-p$, we obtain
\begin{equation}
\begin{aligned}
-up & = \frac{1}{2} \v (h-\hat{h})p - \w p + p \v \hat{h} \\
      & = - \frac{1}{2} \v
      \nabla \cdot (p\v) - \w p + p \hat{h}\v 
\end{aligned}
 \label{eqn:interm_up}
\end{equation}
where~\eqref{eqn:bvp_multi_Z_matrix} is used to obtain the second
equality.  Denoting $E:= \frac{1}{2} \v \,\nabla \cdot (p\v)$, a direct calculation shows that
\begin{equation*}
E_{l} +  \w_{l}p = \frac{1}{2}\sum_{k=1}^d \frac{\partial }{\partial x_k}
\left( p[\v \v^T]_{lk} \right).
\end{equation*}
\notes{$[\v \v^T]_{lk} = \v_l \v_k$, so I sometimes use the latter -- spm}

Substituting this in~\eqref{eqn:interm_up}, on taking the divergence
of both sides, we obtain
\begin{align}
-\nabla \cdot (pu) +\frac{1}{2}\sum_{l,k=1}^d \frac{\partial^2}{\partial x_l \partial
  x_{k}} \left( p \v_l \v_k \right) &= \nabla \cdot (p\v) \hat{h}.
\label{eqn:FPK_23}
\end{align}
Using~\eqref{eqn:bvp_multi_Z_matrix} and~\eqref{eqn:FPK_23} in the forward equation~\eqref{eqn:mod_FPK},
\begin{align*}
\ud p & =  
( h-\hat{h} )(\ud Z_t - \hat{h} \ud t)p.
\end{align*}
This is precisely the K-S equation~\eqref{eqn:Kushner_eqn}, as desired.

\medskip

Finally, we show that $U_t^i$ is admissible.  This follows
from~\Prop{prop:exist_uniq} and Theorem~\ref{thm:thm1}.  The posterior
distribution $p^*$ is the limit of the minimizer sequence
$\{\rho_n\}$, where $\rho_n$ satisfies {\bf PI}($\bar{\lambda}$) and
$\bar{\lambda}>0$ for all $n$.  By Theorem~\ref{thm:thm1}, a
unique solution $\v(x,t)=\nabla\phi(x,t)$ exists for each
$p(x,t)=p^*(x,t)$.   The a priori
bounds~\eqref{eqn:bound1}-\eqref{eqn:bound2} are used to show that
\begin{align*}
\Expect[|\v|^2] &
	 \le \Expect\left[  \frac{1}{\bar\lambda} \int |h(x)|^2
p(x,t) \ud x \right] <\infty,
\\[.3cm]
\Expect[ |u| ] & \le  \Expect\left[  \left( \frac{1}{\bar\lambda} +
  C(\bar\lambda;p)^{1/2} \right)  \int \left( |h(x)|^2 + |\nabla h|^2 \right)
p(x,t) \ud x \right] <\infty,
\end{align*}
where the expression for $C(\bar\lambda;p)$ appears in
Theorem~\ref{thm:thm1}, and we have used the fact that $h,\nabla h\in L^{\infty}$.  That is, the
resulting control input in the feedback particle filter is admissible.
\qed

\bibliography{gradient_flow_FINAL.bbl}

\end{document}